\newtheorem{thm}{Theorem}[section]
\newtheorem{lem}[thm]{Lemma}
\newtheorem{prop}[thm]{Proposition}
\numberwithin{equation}{section}
\renewcommand{\thefootnote}
\renewcommand\Re{\operatorname{Re}}
\author { B\'echir Amri and Abdessalem Guesmi  }
\title{ Poisson Kernels and Hilbert Transforms for Trigonometric Heckman–Opdam Polynomials of type $A_1$}
\date{ }
\begin{document}
 \maketitle
 \begin{center}
 Department of Mathematics, Faculty of Sciences. Taibah university 
Al-madinah Al-Munawarah. Arabia Saudi.\\
E-mail: bamria@taibahu.edu.sa, aguesmi@taibahu.edu.sa
\end{center}
 \begin{abstract}
 In this paper, we investigate the  trigonometric Heckman–Opdam polynomials of type $A_1$. We establish connections with ultraspherical polynomials and derive an explicit expression for the associated Poisson kernel. Using the product formula, we introduce a natural convolution structure and develop a theory of fractional integrals associated with these polynomials. We also define a generalized Hilbert transform in the framework of the Cherednik operator and prove its boundedness on $L^p$-spaces.
 This work provides an alternative perspective on the approach of B. Muckenhoupt and  E.M. Stein \cite{MS}.
 \footnote{Key words: Heckman-Opdam polynomials, Poisson Kernel, Integral operators.  \\
2020 Mathematics Subject Classification: 33C52, 42B10.}

 \end{abstract}
 \section{Introduction}
Fourier series are a fundamental tool in the analysis of periodic functions and constitute one of the central foundations of harmonic analysis. Over the years, numerous generalizations of the classical Fourier expansion have been developed, many of which arise naturally within the theory of special functions and orthogonal polynomials. These generalized Fourier series typically correspond to expansions over orthonormal bases of suitably weighted 
$L^2$ -spaces.
In this work, we focus on the Fourier expansions associated with the non-symmetric trigonometric Heckman–Opdam polynomials of type $A_1$. These polynomials form a remarkable family of orthogonal functions depending on a non-negative multiplicity parameter $k$. When $k=0$
 they reduce precisely to the classical exponential functions $\{e^{inx}, n\in \mathbb{Z}\}$, and the resulting generalized Fourier series becomes the standard Fourier series.
The non-symmetric Heckman–Opdam polynomials were introduced by Opdam, see \cite{O1, O2}, as eigenfunctions of the Cherednik (or Dunkl–Cherednik) operators attached to a given root system. For $A_n$-root system these polynomials exhibit deep connections with non-symmetric Jack polynomials \cite{ba1,Ba} and arise naturally in representation theory, the theory of special functions, and generalized harmonic analysis.
The purpose of this paper is to study the corresponding Fourier analysis associated with the Heckman–Opdam polynomials $\{E_n^k(ix). \quad n\in \mathbb{Z}\}$, which forms an orthogonal basis of the weighted space $L^2([-\pi,\pi], |sin x|^{2k}dx)$.
 In particular, we investigate structural identities, differential–difference equations, norm formulas, and the construction of the Poisson kernel associated with this system. This kernel generalizes the classical Poisson kernel on the unit circle and constitutes a key ingredient in our setting,  where it is used to express the integral kernel of an associated Hilbert transform. This operator is introduced in analogy with the classical Hilbert transform,
we establish a Calderón–Zygmund type theory  for the integral kernel  obtaining pointwise estimates and smoothness conditions that ensure boundedness on 
$L^p([-\pi,\pi], |sin x|^{2k}dx)$, $1<p<\infty$.
\section{Non-symmmetric Heckmann-Opdam polynomials  of type $A_1$ }
  We begin by reviewing and presenting several results concerning the non-symmetric Heckman–Opdam polynomials of type $A_1$. For the theory of   Heckmann-Opdam polynomials associated to general root systems,  we refer to   \cite{O1,O2}.
\par Let $k\geq 0$, the Cherednick operator of type $A_1$ is given by
\begin{eqnarray}\label{T}
T^{k}(f)(x)=\dfrac{df}{dx}(x)+ 2k\;\frac{ f(x)-f(-x)}{1-e^{-2x}}-   k f(x), \qquad f\in C^1(\mathbb{R}).
 \end{eqnarray}
 We define the weight function
$$dm_k(x)=|\sin x |^{2k} dx  $$
and the inner product on $L^2([-\pi,\pi],dm_k(x))$
$$(f,g)_k=\int_{-\pi}^{\pi}f(x)\overline{g(x)}dm_k(x),$$
with its associate norm $\|.\|_{2,k}$. Introduce a partial ordering $\triangleleft$ on $\mathbb{Z}$ as follows:
\begin{equation*}
  j\triangleleft n  \Leftrightarrow \left\{
    \begin{array}{ll}
    |j| < |n|\;\text{and} \;|n|-|j|\in 2\mathbb{Z}^+   , & \hbox{} \\
    or   , & \hbox{} \\
      |j| = |n|\;\text{and} \; n <j . & \hbox{.}
    \end{array}
  \right.
\end{equation*}
The non-symmetric  Heckman-Opdam polynomials $E_n^k$, $n\in \mathbb{Z}$ are characterized by the following conditions:
\begin{eqnarray}\label{def}
 (a)&&    E_n^{k}(z)=\textrm{e}^{nz}+ \sum_{j\triangleleft n }c_{n,j}\; \textrm{e}^{jz},\qquad z\in \mathbb{C}\qquad\qquad\qquad\qquad\qquad\qquad\qquad\qquad
 \\ (b) && (E_n^{k}(\textrm{i}x),\textrm{e}^{\textrm{i}jx})_{k}=0,\quad \text{for any } \; j\triangleleft n.
\end{eqnarray}
  It follows from Corollary 5.2 and
Proposition 6.1 in \cite{Sah} that the coefficients  $c_{n,j}$  are all real and non-negative.
The  polynomials $E_n^k$ diagonalize simultaneously the  Cherednik operators. In particular, they satisfy 
\begin{equation}\label{TE}
  T^{k} ( E_n^{k})= n_k \; E_n^{k}, \qquad n\in \mathbb{Z},
\end{equation}
 where $ n_k=n+k $ if $n >0$ and  $n_k=n-k$ if $n\leq 0$.
We observe that,
\begin{equation}\label{13}
 E_0^k(x)=1,\quad \text{and}\quad E_1^k(x)=e^x.
\end{equation}
 Moreover, the family of trigonometric polynomials
$\{ E_n^{k}(ix), \;n\in \mathbb{Z}\}$    is an orthogonal basis of 
  $L^2([-\pi,\pi],dm_k(x))$.\\
 \par The Heckman-Opdam-Jacobi  polynomials  $P_n^k$, $n\in \mathbb{Z}_+$, are defined by
  $$P_n^{k}(x) =\frac{1}{2} \Big(E_n^{k}(x)+E_n^{k}(-x)\Big),$$
and they satisfy 
$$ (T^k)^2P_n^k=(n+k)^2 P_n^k.$$
In particular,  $P_n^k$ is an  eigenfunction of the operator  $L_k$, which coincides with the even part of $(T^k)^2$, given by 
$$L_k(f)=f''+2k\; \frac{1+e^{-2x}}{1-e^{-2x}}\;f'(x)+k^2f(x)$$
The Heckman-Opdam-Jacobi  polynomials are expressed via the hypergeometric function $_2F_1$
as
\begin{equation}\label{P}
P_n^k(x)=P_n^k(0)\;_2F_1(n+2k,-n,k+1/2,-\sinh^2(x/2))
\end{equation}
where, for all $n\in \mathbb{Z}^+$,
$$ P_n^k(0)=E_n^{k}(0)= \frac{\Gamma(k+1)\Gamma(n+2k)}{\Gamma(2k+1)\Gamma(n+k)}=\frac{\Gamma(k)\Gamma(n+2k)}{2\Gamma(2k)\Gamma(n+k)}.$$
  An important identity that can follows from  (\ref{P}) is
\begin{equation}\label{pn}
\frac{dP_n^k}{dx}(x)= 2 nP_{n-1}^{k+1}(x)\sinh x, \quad n\geq 1.
\end{equation} 
Further,  making use of the identities:
$$\frac{dE_n^k}{dx} (x)+2k \frac{E_n^k(x)-E_n^k(-x)}{1-e^{-2ix}}=(n+2k)E_n^k(x)$$
$$\frac{dE_n^k}{dx}-2k \frac{E_n^k(x)-E_n^k(-x)}{1-e^{2x}}=(n+2k)E_n^k(-x)$$
and subtracting the second  from the first we obtain  that 
$$\frac{dP_n^k}{dx}(x)=n\;\dfrac{E_n^k(x)-E_n^k(-x)}{2}.$$
 This yields, in view of (\ref{pn}), the representation formula:
\begin{equation}\label{jacobi}
   E_n^k(x)= P_{n}^k( x)+2\sinh x \;P_{n-1}^{k+1}(x),\quad n\geq 1.
\end{equation}
 \par It should be noted that the non-symmetric  Heckman-Opdam polynomials $E_n^k$ are closely  related to non-symmetric
Jack polynomials,  see \cite{Sah}.
If  $\lambda=(\lambda_1,\lambda_2)\in \mathbb{Z}\times \mathbb{Z} $ and $\mathcal{I}_{\lambda }^k$ is the correspondent non-symmetric
Jack polynomial then
$$E_{\lambda_2-\lambda_1}^k= \mathcal{I}_{\lambda}^k(e^{-x},e^{x}).$$
 \begin{prop}
 For all $n\in \mathbb{Z}$, we have
\begin{equation}\label{12}
   E_{n+1}^k(x)=e^xE_{-n}^k(-x).
\end{equation}
\end{prop}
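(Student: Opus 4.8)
The plan is to exploit the uniqueness encoded in the characterization (a)--(b): I will set $F(x):=e^{x}E_{-n}^{k}(-x)$ and verify that $F$ obeys the two defining conditions of $E_{n+1}^{k}$, so that $F=E_{n+1}^{k}$ follows at once. Since the family $\{E_m^k\}$ is obtained from the monomials $e^{mx}$ by a Gram--Schmidt type procedure relative to the ordering $\triangleleft$ and the inner product $(\cdot,\cdot)_k$, the pair (a)--(b) determines each polynomial uniquely, and this uniqueness is the only input from the general theory I need.

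The combinatorial heart of the argument is a single statement about the ordering: for every $n\in\mathbb{Z}$ the involution $j\mapsto 1-j$ carries $\{j:\,j\triangleleft -n\}$ bijectively onto $\{j:\,j\triangleleft n+1\}$. I would prove the forward inclusion $j\triangleleft -n\Rightarrow 1-j\triangleleft n+1$ by splitting into the cases $n\ge 0$ and $n\le -1$ and checking the two clauses in the definition of $\triangleleft$. The parity clause is automatic: if $|{-n}|-|j|\in 2\mathbb{Z}$ then $n\equiv j \pmod 2$, hence $|n+1|-|1-j|\equiv (n+1)-(1-j)=n+j\equiv 0\pmod 2$; the size clause then reduces to checking $|1-j|\le |n+1|$ with the correct strict/equality behaviour, which I would settle by bounding the range of admissible $j$. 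Because $j\mapsto 1-j$ is an involution, applying the forward inclusion with $n$ replaced by $-(n+1)$ immediately yields the reverse implication $j\triangleleft n+1\Rightarrow 1-j\triangleleft -n$, so a single one-directional computation gives the full bijection.

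With this in hand the two conditions follow mechanically. For condition (a), substituting the expansion $E_{-n}^{k}(z)=e^{-nz}+\sum_{j\triangleleft -n}c_{-n,j}e^{jz}$ into $F$ gives $F(x)=e^{(n+1)x}+\sum_{j\triangleleft -n}c_{-n,j}e^{(1-j)x}$, whose leading exponent is $n+1$ with coefficient $1$ and whose remaining exponents $1-j$ all satisfy $1-j\triangleleft n+1$ by the bijection; thus $F$ has exactly the triangular form required of $E_{n+1}^{k}$. For condition (b), I would compute $(F(ix),e^{ijx})_k$ and perform the change of variables $x\mapsto -x$, under which $dm_k$ is invariant; this turns the integral into $(E_{-n}^{k}(ix),e^{i(1-j)x})_k$. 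By the orthogonality (b) for $E_{-n}^k$ this vanishes precisely when $1-j\triangleleft -n$, and for $j\triangleleft n+1$ the reverse implication guarantees exactly that.

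I expect the only genuine obstacle to be the bookkeeping in the combinatorial step, in particular the borderline case $|1-j|=|n+1|$ (which occurs for $n\le -1$ and forces one to invoke the second, sign-based clause of $\triangleleft$ rather than the parity clause); the analytic steps, namely the change of variables and the matching of expansions, are routine once the ordering is under control. As a sanity check one may note that $n=-1$ gives $F(x)=e^{x}E_{1}^{k}(-x)=e^{x}e^{-x}=1=E_{0}^{k}(x)$, consistent with \eqref{13}.
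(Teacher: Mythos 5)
Your proof is correct, and it takes a genuinely different route from the paper's. The paper argues spectrally: it sets $H_n^k(x)=e^xE_{-n}^k(-x)$, applies the Cherednik operator (\ref{T}), and uses the eigenvalue equation (\ref{TE}) for $E_{-n}^k$ evaluated at $-x$ to obtain $T^k(H_n^k)=\bigl(1-(-n)_k\bigr)H_n^k=(n+1)_kH_n^k$, then identifies $H_n^k$ with $E_{n+1}^k$ by comparing leading coefficients (the cases $n=-1,0$ being read off from (\ref{13})). You instead verify the defining conditions (a)--(b) directly for $F(x)=e^xE_{-n}^k(-x)$, the crux being the combinatorial lemma that the involution $j\mapsto 1-j$ carries $\{j:\,j\triangleleft -n\}$ bijectively onto $\{j:\,j\triangleleft n+1\}$; your reduction of the converse implication to the forward one via $n\mapsto -(n+1)$ is a nice economy, and the lemma itself checks out, including the borderline case $|1-j|=|n+1|$ (which occurs for $n\le -2$, $j\le 0$, $|j|=-n-2$) where the sign clause of $\triangleleft$ applies exactly as you anticipate. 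Each approach buys something: the paper's is shorter and involves no bookkeeping with the partial order, but it tacitly relies on the eigenvalues $m_k$ being pairwise distinct (true for all $k\ge 0$, since $m+k>0\ge m'-k$ whenever $m>0\ge m'$) so that the eigenvalue together with the leading term pins the polynomial down; yours is more elementary and self-contained --- it uses only the characterization (a)--(b) and its uniqueness, never the operator $T^k$ --- it exposes the combinatorial content of the identity, and it treats all $n$ uniformly, the cases $n=-1,0$ being precisely those where both index sets are empty. One small overstatement: condition (b) gives vanishing \emph{when} $1-j\triangleleft -n$, not ``precisely when''; but since you only use that one implication, nothing in your argument is affected.
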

\begin{proof}
In view of (\ref{13}), the identity (\ref{12}) holds for     for $n= -1,0$. Let $n\in \mathbb{Z}$, $n\neq -1,0$. Put
 $H_n^k(x)= e^xE_{-n}^k(-x)$. It is  enough to check that $ T^k(H_n)=(n+1)_kH_n$. We have
\begin{eqnarray*}
&&T^k(H_n)(x)
\\ &&\quad =e^xE_{-n}^k(-x)-e^x(E_{-n}^k)'(-x)+2k \; \frac{e^xE_{-n}^k(-x)-e^{-x}E_{-n}^k(x)}{1-e^{-2x}}-ke^xE_{-n}^k(-x)
\\&&\quad= -e^x \left((E_{-n}^k)'(-x)+2k\;\frac{E_{-n}^k(-x)-E_{-n}^k(x)}{1-e^{2x}} -k E_{-n}^k(-x) \right)+  e^xE_{-n}^k(-x)
\\&&\quad= ( 1- (-n)_k)e^xE_{-n}(-x)=(n+1)_ke^xE_{-n}(-x)=(n+1)_kH_n(x).
\end{eqnarray*}
Comparing the leading coefficients yields (\ref{12}). 
\end{proof}
\begin{prop}
 For $n \in \mathbb{Z}^+$, $n\neq 0$, we have
\begin{equation}\label{45}
 E_{-n}^k(x)=E_{n}^k(-x) +\frac{k}{n+k}E_n^k(x).
\end{equation}
  \end{prop}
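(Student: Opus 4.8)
The plan is to recognize the right-hand side of (\ref{45}) as an eigenfunction of the Cherednik operator $T^k$ carrying the eigenvalue attached to $E_{-n}^k$, and then to pin it down via the characterization (\ref{def}), exactly in the spirit of the proof of (\ref{12}). Write $e(x)=E_n^k(x)$, let $\sigma$ denote the reflection $\sigma f(x)=f(-x)$, and set $F=\sigma e+\tfrac{k}{n+k}\,e$, which is the candidate for $E_{-n}^k$. Since $-n\le 0$, the eigenvalue associated with $E_{-n}^k$ is $(-n)_k=-(n+k)$, so the goal is to show $T^kF=-(n+k)F$ together with the correct normalization.

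The first step is to understand how $T^k$ interacts with $\sigma$. A direct computation from the defining formula (\ref{T}), combining the two singular kernels through the elementary identity $\frac{1}{1-e^{2x}}+\frac{1}{1-e^{-2x}}=1$, yields the operator relation $T^k\sigma+\sigma T^k=-2k\,I$, where $I$ is the identity. Applying this to $e$ and using (\ref{TE}) in the form $T^ke=(n+k)e$ gives
\[
T^k(\sigma e)=-\sigma(T^ke)-2k\,e=-(n+k)\,\sigma e-2k\,e,
\]
i.e. $T^k\big(E_n^k(-x)\big)=-(n+k)E_n^k(-x)-2kE_n^k(x)$.

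With this the eigenvalue computation is immediate:
\[
T^kF=T^k(\sigma e)+\frac{k}{n+k}\,T^ke=-(n+k)\,\sigma e-2k\,e+k\,e=-(n+k)\,\sigma e-k\,e=-(n+k)F,
\]
so $F$ is an eigenfunction of $T^k$ with eigenvalue $(-n)_k$. To conclude $F=E_{-n}^k$ I would note that the eigenvalues $\{m_k:m\in\mathbb{Z}\}$ are pairwise distinct: the values $m+k$ for $m\ge 1$ are all $>k$, while the values $m-k$ for $m\le 0$ are all $\le -k$, and each branch is strictly increasing. Since $\{E_m^k\}_{m\in\mathbb{Z}}$ is a basis of the space of exponential polynomials $\mathrm{span}\{e^{jx}:j\in\mathbb{Z}\}$ (by the triangularity in (\ref{def})(a)) consisting of $T^k$-eigenfunctions, each eigenvalue is simple, so $F=c\,E_{-n}^k$ for some scalar $c$. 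Finally I match the coefficient of $e^{-nx}$: it equals $1$ in $E_{-n}^k$ by (\ref{def})(a), while in $F$ the term $E_n^k(-x)$ contributes $e^{-nx}$ with coefficient $1$ and the exponentials occurring in $\tfrac{k}{n+k}E_n^k(x)$ are $e^{nx}$ and $e^{jx}$ with $j\triangleleft n$ (hence $|j|<n$), none equal to $e^{-nx}$. Thus $c=1$ and (\ref{45}) follows.

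The main obstacle is the first step, namely the commutation relation $T^k\sigma+\sigma T^k=-2k\,I$ (equivalently, the explicit evaluation of $T^k(E_n^k(-x))$). The delicate point is the handling of the two seemingly singular terms $\tfrac{1}{1-e^{-2x}}$ and $\tfrac{1}{1-e^{2x}}$: they must be combined so that the singular contributions cancel and leave the clean term $-2k\,f(-x)$. Everything after that is routine bookkeeping with the eigenvalues and the leading coefficients.
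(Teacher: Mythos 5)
Your proof is correct and takes essentially the same approach as the paper: you show that $E_n^k(-x)+\tfrac{k}{n+k}E_n^k(x)$ is a $T^k$-eigenfunction with eigenvalue $-(n+k)=(-n)_k$ and then identify it with $E_{-n}^k$ by matching the coefficient of $e^{-nx}$. The only difference is that you supply details the paper leaves implicit, namely the derivation of $T^k\big(E_n^k(-x)\big)=-(n+k)E_n^k(-x)-2kE_n^k(x)$ from the relation $T^k\sigma+\sigma T^k=-2k\,I$, and the simplicity of the eigenvalues which justifies the final identification.
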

\begin{proof}
Observe that  for $n\geq 1$ we have $T^k(E_n(-x))=-2kE_n(x)- (n+k)E_n(-x)$. Then
\begin{eqnarray*}
  T^k\left( E_n^k(-x)+\frac{k}{n+k}E_n^k(x)\right)&=&-2kE_{n}(x)- (n+k)E_n(-x)+ kE_{n}(x)
\\&=&-kE_{n}(x)-(n+k)E_n^k(-x)
\\&=&-(n+k)\left(E_n^k(-x)+\frac{k}{n+k}E_n^k(x)\right).
 \end{eqnarray*}
Equality  (\ref{45}) follows by comparing the highest coefficient.
\end{proof}
  \begin{prop}
  For all $n  \in \mathbb{Z}^+$, we have
  \begin{equation}\label{Ga}
   \|E_{n+1}^k\|_k^2=\|E_{-n}^k\|_k^2=  \pi2^{1-2k} n!\; \frac{\Gamma(n+2k+1)}{\Gamma(n+k+1)^2}. 
  \end{equation}
  Here for simplicity, we write  $\|E_{j}^k\|_k^2$ instead of $\|E_{j}^k((ix)\|_k^2$.
\end{prop}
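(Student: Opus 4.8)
The strategy is to reduce the computation to the $L^2$-norms of the symmetric Heckman–Opdam–Jacobi polynomials, which I will identify with ultraspherical (Gegenbauer) polynomials whose normalization is classical. I would first dispose of the equality $\|E_{n+1}^k\|_k^2=\|E_{-n}^k\|_k^2$, which is essentially free from (\ref{12}). Setting $z=ix$ there gives $E_{n+1}^k(ix)=e^{ix}E_{-n}^k(-ix)$, so that $|E_{n+1}^k(ix)|^2=|E_{-n}^k(-ix)|^2$ for real $x$. Since $dm_k$ is invariant under $x\mapsto -x$, the substitution $x\mapsto -x$ in $\int_{-\pi}^{\pi}|E_{n+1}^k(ix)|^2\,dm_k(x)$ turns it into $\int_{-\pi}^{\pi}|E_{-n}^k(ix)|^2\,dm_k(x)$, which is exactly $\|E_{-n}^k\|_k^2$. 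It then suffices to evaluate $\|E_{n+1}^k\|_k^2$.

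For that I would use the representation formula (\ref{jacobi}) at index $n+1$, namely $E_{n+1}^k(ix)=P_{n+1}^k(ix)+2\sinh(ix)\,P_n^{k+1}(ix)=P_{n+1}^k(ix)+2i\sin x\,P_n^{k+1}(ix)$. The decisive point is that $P_m^k(ix)$ is \emph{real}: from (\ref{P}) one has $P_m^k(ix)=P_m^k(0)\,{}_2F_1\!\big(m+2k,-m,k+\tfrac12;\sin^2(x/2)\big)$, using $-\sinh^2(ix/2)=\sin^2(x/2)$, which is a real terminating series. Thus $P_{n+1}^k(ix)$ and $2\sin x\,P_n^{k+1}(ix)$ are the real and imaginary parts of $E_{n+1}^k(ix)$, no cross term appears, and
\[
|E_{n+1}^k(ix)|^2=\big(P_{n+1}^k(ix)\big)^2+4\sin^2 x\,\big(P_n^{k+1}(ix)\big)^2 .
\]
Integrating against $dm_k$ and folding the extra $\sin^2 x$ into the weight $|\sin x|^{2(k+1)}$ yields $\|E_{n+1}^k\|_k^2=\|P_{n+1}^k\|_k^2+4\,\|P_n^{k+1}\|_{k+1}^2$, so the problem is reduced to evaluating the norms of the symmetric polynomials.

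Next I would match the hypergeometric form above with the Gegenbauer polynomial $C_m^{(k)}(\cos x)=\frac{(2k)_m}{m!}\,{}_2F_1\!\big(-m,m+2k;k+\tfrac12;\sin^2(x/2)\big)$ and the value of $P_m^k(0)$ recorded after (\ref{P}); this gives $P_m^k(ix)=\frac{m!\,\Gamma(k)}{2\,\Gamma(m+k)}\,C_m^{(k)}(\cos x)$. Since $P_m^k(ix)$ is even in $x$, the change of variable $t=\cos x$ converts $\tfrac12\|P_m^k\|_k^2=\int_0^\pi(\cdots)\,|\sin x|^{2k}dx$ into the ultraspherical integral with weight $(1-t^2)^{k-1/2}$, and the classical formula $\int_{-1}^1 \big(C_m^{(k)}(t)\big)^2(1-t^2)^{k-1/2}dt=\frac{\pi 2^{1-2k}\Gamma(m+2k)}{m!\,(m+k)\,\Gamma(k)^2}$ produces $\|P_m^k\|_k^2=\frac{\pi 2^{1-2k}\,m!\,\Gamma(m+2k)}{2\,\Gamma(m+k)\,\Gamma(m+k+1)}$.

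Finally I would substitute this into $\|E_{n+1}^k\|_k^2=\|P_{n+1}^k\|_k^2+4\|P_n^{k+1}\|_{k+1}^2$; the shift $k\mapsto k+1$ in the second term turns $2^{1-2(k+1)}$ into $2^{-1-2k}$, which recombines with the factor $4$ to restore $2^{1-2k}$, and everything collapses over the common denominator $\Gamma(n+k+1)\Gamma(n+k+2)$. The one identity doing the real work is $(n+1)!\,\Gamma(n+2k+1)+n!\,\Gamma(n+2k+2)=2(n+k+1)\,n!\,\Gamma(n+2k+1)$, after which repeated use of $\Gamma(z+1)=z\Gamma(z)$ gives $\pi 2^{1-2k}\,n!\,\Gamma(n+2k+1)/\Gamma(n+k+1)^2$. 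I expect the only real obstacle to be bookkeeping: pinning down the Gegenbauer normalizing constant and the powers of $2$ exactly, and checking that the boundary index $n=0$ (where $P_0^{k+1}$ is normalized by (\ref{P}) to the constant $\tfrac12$) is consistent, so that no separate argument is needed.
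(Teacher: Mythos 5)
Your proof is correct, but it takes a genuinely different route from the paper's. The paper stays entirely inside Section 2: writing $E_n^k(-x)=E_{-n}^k(x)-\frac{k}{n+k}E_n^k(x)$ from (\ref{45}) and using the orthogonality of $E_n^k$ and $E_{-n}^k$, it obtains $\|E_n^k\|_{2,k}^2=\|E_{-n}^k\|_{2,k}^2+\frac{k^2}{(n+k)^2}\|E_n^k\|_{2,k}^2$, hence the recursion (\ref{69}), $\|E_{n+1}^k\|_{2,k}^2=\|E_{-n}^k\|_{2,k}^2=\frac{n(n+2k)}{(n+k)^2}\|E_n^k\|_{2,k}^2$, which it iterates from the beta-integral base case $\|E_1^k\|_{2,k}^2=\|E_0^k\|_{2,k}^2$. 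You keep only the identity $\|E_{n+1}^k\|_k^2=\|E_{-n}^k\|_k^2$ (the same use of (\ref{12})) and then evaluate $\|E_{n+1}^k\|_k^2$ in closed form with no induction: splitting $E_{n+1}^k(ix)$ via (\ref{jacobi}) into real part $P_{n+1}^k(ix)$ and imaginary part $2\sin x\,P_n^{k+1}(ix)$ — both real because the $_2F_1$ argument in (\ref{P}) becomes $\sin^2(x/2)$ — kills the cross term and gives $\|E_{n+1}^k\|_k^2=\|P_{n+1}^k\|_k^2+4\|P_n^{k+1}\|_{k+1}^2$, after which the Gegenbauer connection and Szeg\H{o}'s norm formula finish the computation. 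This is non-circular: the identification $P_m^k(ix)=\frac{m!\,\Gamma(k)}{2\Gamma(m+k)}C_m^k(\cos x)$, which the paper records only in Section 3 as (\ref{pc}), follows from (\ref{P}) alone and does not use (\ref{Ga}). Your arithmetic checks out, including the key identity $(n+1)!\,\Gamma(n+2k+1)+n!\,\Gamma(n+2k+2)=2(n+k+1)\,n!\,\Gamma(n+2k+1)$ and the cancellation of the factor $4$ against $2^{-2}$. As for what each approach buys: the paper's recursion is self-contained and needs only one elementary integral, while yours imports the classical ultraspherical normalization — and requires the care you showed about the factor of $2$, since the displayed $\|C_n^k\|_{2,k}^2$ in the paper is the integral over $[-1,1]$ (equivalently $[0,\pi]$), not over $[-\pi,\pi]$ against $dm_k$; in exchange you get the individual norms $\|P_m^k\|_k^2$ and the even/odd orthogonal decomposition of $E_n^k(ix)$ for free, which is exactly the structure the paper re-derives in Section 3 to compute the Poisson kernel. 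Your boundary check that (\ref{jacobi}) forces the convention $P_0^{k+1}=\tfrac12$ (consistent with the paper's formula for $P_n^k(0)$ at $n=0$) is also the right thing to verify, so no separate argument is needed for $n=0$.
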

\begin{proof}
In view of  (\ref{45}), we write
$$  E_n^k(-x)= E_{-n}^k(x)-\frac{k}{n+k}E_n^k(x), \quad n\geq 1\quad.$$
  Since $E_n^k$ and $E_{-n}^k$ are orthogonal, it follows that
$$\|E_n^k\|_{2,k}^2= \|E_{-n}^k\|_{2,k}^2 +\frac{k^2}{(n+k)^2}\|E_n^k\|_{2,k}^2$$
 and hence
 \begin{equation}\label{69}
\|E_{n+1}^k\|_{2,k}^2= \|E_{-n}^k\|_{2,k}^2 =
\frac{n(n+2k)}{(n+k)^2} \|E_n^k\|_{2,k}^2 .
 \end{equation}
Moreover, one easily checks that
 $$ \|E_1^k\|_{2,k}^2=\|E_{0}^k\|_{2,k}^2=   \pi2^{1-2k}\frac{\Gamma(2k+1)}{\Gamma(k+1)^2}=\dfrac{2\sqrt{\pi}\Gamma(k+\dfrac{1}{2})}{\Gamma(k+1)}.$$
The formula (\ref{Ga}) then follows.
\end{proof}
 \par In conclusion,  the explicit expressions of the Heckman–Opdam polynomials $E_n^k$ are given by:
$E_0^k = 1$, and for all $n\geq 1$,
$$
\begin{cases}
\displaystyle
E_n^k(x)
= P_n^k(x) + 2\sinh(x)\, P_{n-1}^{\,k+1}(x), \\[2mm]
\displaystyle
E_{-n}^k(x)
= \frac{n+2k}{\,n+k\,}\, P_n^k(x)
- \frac{2n}{\,n+k\,}\, \sinh(x)\, P_{\,n-1}^{\,k+1}(x).
\end{cases}
$$
\section{Poisson kernel}
 The Heckman–Opdam–Jacobi polynomials can be expressed in terms of ultraspherical polynomials,  
 \begin{equation}\label{pc}
 P_n^k(ix)=\dfrac{P_n^k(0)}{C_n^k(1)}C_n^k(cosx)=    \dfrac{\Gamma(k)n!}{2\Gamma(n+k)}C_n^k(cosx).
 \end{equation}
Here, the ultraspherical polynomials  $C_n^k$   are defined via  their generating function.
$$\sum_{n=0}^{+\infty}r^n C_n^k(t)=\frac{1}{(1-2tr+r^2)^k}, \qquad |r|<1.$$
We will use the following identities for the ultraspherical polynomials:
 $$C_n^k(1)=\frac{\Gamma(n+2k)}{n!\Gamma(2k)}, \quad \|C_n^k\|_{2,k}^2=\pi 2^{1-2k}\dfrac{\Gamma(n+2k)}{n!(n+k)\Gamma(k)^2}\quad\text{and}\quad \frac{d}{dx} C_n^k(x)=2k C_{n-1}^{k+1}(x)$$
  (See Szegö, (\cite{G})).  
  Let $\gamma_n^k=\|E_n^k\|_{2,k}^{-2} $. We define the Poisson kernel $ P_k(r,x,y)$, 
   for $0\leq r <1$ and $x,y\in [-\pi,\pi]$, by
    \begin{eqnarray*}
     P_k(r,x,y)&=&\sum_{n=-\infty}^{+\infty}\gamma_n^kr^{|n|+k}E_n^k(ix)E_n^k(-iy) \\&=&
  \sum_{n=0}^{+\infty}\gamma_n^kr^{n+k}E_n^k(ix)E_n^k(-iy)+\sum_{n=1}^{+\infty}
  \gamma_{-n}^k r^{n+k}E_{-n}^k(ix)E_{-n}^k(-iy).
    \end{eqnarray*}
 This kernel is real. In fact, using (\ref{45}) we may write
 $$ \sum_{n=1}^{+\infty}\gamma_{-n}^k r^{n+k}E_{-n}^k(ix)E_{-n}^k(-iy)=
  \sum_{n=1}^{+\infty}\gamma_{-n}^k \frac{k}{n+k}r^{n+k}\Big\{E_{n}^k(ix)E_{n}^k(iy)+E_{n}^k(-ix)E_{n}^k(-iy)\Big\} $$
  $$+  \sum_{n=1}^{+\infty}\gamma_{-n}^k r^{n+k}E_{n}^k(-ix)E_{n}^k(iy) +  
  \sum_{n=1}^{+\infty}\gamma_{-n}^k \frac{k^2}{(n+k)^2}r^{n+k}E_{n}^k(ix)E_{n}^k(-iy)$$
By virtue of (\ref{69}), we have 
 $$ \sum_{n=1}^{+\infty}\gamma_{-n}^k \frac{k^2}{(n+k)^2}r^{n+k}
 E_{n}^k(ix)E_{n}^k(-iy)
 =\sum_{n=1}^{+\infty}\gamma_{-n}^k r^{n+k}
 E_{n}^k(ix)E_{n}^k(-iy)-\sum_{n=1}^{+\infty}\gamma_{n}^kr^{n+k}E_{n}^k(ix)E_{n}^k(-iy)$$
Collecting terms gives
  $$ P_k(r,x,y)= \gamma_0^k r^{k} +2\sum_{n=1}^{+\infty}\gamma_{-n}^k \frac{k}{n+k}r^{n+k}\Re\Big\{E_{n}^k(ix)E_{n}^k(iy)\Big\} +2 \sum_{n=1}^{+\infty}\gamma_{-n}^k r^{n+k}
 \Re\Big\{E_{n}^k(ix)E_{n}^k(-iy)\Big\}.$$ 
 \par  Using the identities relating the non-symmetric polynomials $E_n^k$ 
and the symmetric  polynomials $P_n^k$, the Poisson kernel can be written in the form
 \begin{eqnarray*}
     P_k(r,x,y)&=& \gamma_0^k r^{k}+2\sum_{n=1}^{+\infty} \frac{n+k}{n}\gamma_{n}^kr^{n+k}P_n^k(ix)P_n^k(iy)\\&&\qquad\qquad\qquad\qquad+8\sin x\sin y\sum_{n=1}^{+\infty}\frac{n+k}{n+2k}\gamma_{n}^kr^{n+k}P_{n-1}^{k+1}(ix)P_{n-1}^{k+1}(iy)
     \\&=&\frac{1}{2}\sum_{n=0}^{+\infty}r^{n+k}\frac{C_n^k(\cos x)C_n^k(\cos y)}{\|C_n^k\|^2}+\frac{1}{2}\sin x\sin y\sum_{n=0}^{+\infty}r^{n+k+1}\frac{C_n^{k+1}(\cos x)C_n^{k+1}(\cos y)}{\|C_n^{k+1}\|^2}.
    \end{eqnarray*}
We next invoke the following fundamental identity (see for instance (2.12) of \cite{MS}):  
 \begin{equation}
 \sum_{n=0}^{+\infty}r^{n}\frac{C_n^k(\cos x)C_n^k(\cos y)}{\|C_n^k\|^2}= \frac{k}{\pi}(1-r^2)\int_{-1}^1\dfrac{(1-u^2 )^{k-1} du}{(1-2r(\cos xcos y+u\sin x\sin y)+r^2 )^{k+1}}.
\end{equation}  
 We obtain
\begin{eqnarray} \label{33}
     P_k(r,x,y)&=& \frac{k}{2\pi}(1-r^2)r^{k}\int_{-1}^1\dfrac{(1-u^2 )^{k-1} du}{(1-2r(\cos xcos y+u\sin x\sin y)+r^2 )^{k+1}}
 \nonumber    \\&+& \frac{k+1}{2\pi}(1-r^2)r^{k+1}\sin x\sin y\int_{-1}^1\dfrac{(1-u^2 )^{k} du}{(1-2r(\cos xcos y+u\sin x\sin y)+r^2 )^{k+2}}.\nonumber\\&&
    \end{eqnarray}
  By performing an integration by parts on the second integral, where we have 
  $$\frac{k+1}{2\pi}(1-r^2)r^{k+1}\sin x\sin y\int_{-1}^1\dfrac{(1-u^2 )^{k} du}{(1-2r(\cos xcos y+u\sin x\sin y)+r^2 )^{k+2}}$$
$$ = \frac{k}{2\pi}(1-r^2)r^{k}\int_{-1}^1\dfrac{u(1-u^2 )^{k-1} du}{(1-2r(\cos xcos y+u\sin x\sin y)+r^2 )^{k+1}}$$
we arrive at the compact  representation
\begin{eqnarray*}
     P_k(r,x,y)= \frac{k}{2\pi}(1-r^2)r^{k}\int_{-1}^1\dfrac{(1+u)(1-u^2 )^{k-1} du}{(1-2r(\cos xcos y+u\sin x\sin y)+r^2 )^{k+1}}.
     \end{eqnarray*}
We  state this result in the following theorem:
\begin{thm}
 The Poisson kernel has the following expansion
\begin{eqnarray}
     P_k(r,x,y)= \frac{k}{2\pi}(1-r^2)r^{k}\int_{-1}^1\dfrac{(1+u)(1-u^2 )^{k-1} du}{(1-2r(\cos xcos y+u\sin x\sin y)+r^2 )^{k+1}}.
     \end{eqnarray}
\end{thm}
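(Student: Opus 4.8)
The plan is to reduce the bilinear spectral series defining $P_k(r,x,y)$ to two normalized ultraspherical generating sums, evaluate each by the Muckenhoupt--Stein product formula, and then merge the two resulting integrals into a single one by one integration by parts in the auxiliary variable $u$. First I would start from the spectral definition of $P_k(r,x,y)$ and use the representation formula (\ref{jacobi}) together with the reality reduction obtained from (\ref{45}) and (\ref{69}) to rewrite the series purely in terms of the symmetric polynomials $P_n^k$ and $P_{n-1}^{k+1}$. Passing to ultraspherical polynomials via (\ref{pc}) then turns these into the two sums $\sum_n r^{n+k} C_n^k(\cos x)C_n^k(\cos y)/\|C_n^k\|^2$ and $\sin x\sin y\sum_n r^{n+k+1} C_n^{k+1}(\cos x)C_n^{k+1}(\cos y)/\|C_n^{k+1}\|^2$.

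Next I would apply the fundamental identity (2.12) of \cite{MS} to the level-$k$ and level-$(k+1)$ sums separately. Writing $D=1-2r(\cos x\cos y+u\sin x\sin y)+r^2$, this produces exactly the two-integral form (\ref{33}): a first term with weight $(1-u^2)^{k-1}$ against $D^{k+1}$, and a second term with weight $(1-u^2)^{k}$ against $D^{k+2}$ carrying the prefactor $\sin x\sin y$.

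The decisive step is to notice that the second integrand is, up to the weight $(1-u^2)^k$, an exact $u$-derivative. Since $\partial_u D=-2r\sin x\sin y$, one has $\partial_u\big(D^{-(k+1)}\big)=2(k+1)r\sin x\sin y\,D^{-(k+2)}$, so after absorbing constants the second term equals $\frac{1}{4\pi}(1-r^2)r^{k}\int_{-1}^1 (1-u^2)^k\,\partial_u\big(D^{-(k+1)}\big)\,du$. Integrating by parts, the boundary term $\big[(1-u^2)^kD^{-(k+1)}\big]_{-1}^1$ vanishes because $k>0$ makes $(1-u^2)^k$ vanish at $u=\pm1$, while $\partial_u(1-u^2)^k=-2ku(1-u^2)^{k-1}$ supplies the factor $k$. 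This recasts the second term as $\frac{k}{2\pi}(1-r^2)r^{k}\int_{-1}^1 u(1-u^2)^{k-1}D^{-(k+1)}\,du$, now sharing the weight and denominator of the first term. Adding the two yields the factor $(1+u)(1-u^2)^{k-1}$ and the claimed compact representation.

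I expect the only real work to be bookkeeping rather than conceptual: tracking the constants $\gamma_n^k$, $\|C_n^k\|^2$, and the powers of $r$ so that both levels emerge with the common prefactor $\frac{k}{2\pi}(1-r^2)r^k$, and justifying the termwise interchange of summation and the $u$-integral in the product formula, which is legitimate for $0\le r<1$ by uniform convergence. The vanishing of the boundary term needs $k>0$, consistent with the standing assumption $k\ge0$ once the trivial case $k=0$ (the classical Poisson kernel) is set aside.
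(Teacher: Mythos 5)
Your proposal is correct and follows essentially the same route as the paper: reduction of the bilinear series to the two normalized ultraspherical sums via (\ref{45}), (\ref{69}), (\ref{jacobi}) and (\ref{pc}), evaluation of each by identity (2.12) of \cite{MS} to reach (\ref{33}), and then a single integration by parts in $u$ recognizing $\partial_u\bigl(D^{-(k+1)}\bigr)=2(k+1)r\sin x\sin y\,D^{-(k+2)}$ to merge the two integrals into the $(1+u)(1-u^2)^{k-1}$ form. Your constant-tracking and the vanishing of the boundary term for $k>0$ match the paper's computation, which states the integration by parts without detail; no changes are needed.
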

To confirm consistency, we test the formula for $k=0$
and show that it coincides with the classical Poisson kernel. 
Define 
$$h_k(u)=\dfrac{u^{-1}}{(1-2r(\cos xcos y+u\sin x\sin y)+r^2)^{k+1}}$$
In view of (\ref{33})
$$\frac{k}{2\pi}(1-r^2)r^{k}\int_{-1}^1\dfrac{(1-u^2 )^{k-1} du}{(1-2r(\cos xcos y+u\sin x\sin y)+r^2 )^{k+1}}$$
$$=-\frac{1}{4\pi}(1-r^2)r^{k}
\lim_{\varepsilon\rightarrow 0^+}\int_{\varepsilon\leq |u|\leq 1}-2ku(1-u^2 )^{k-1}h(u) du $$
$$=-\frac{1}{4\pi}(1-r^2)r^{k}\lim_{\varepsilon\rightarrow 0^+}
\left\{(1-\varepsilon^2)^kh(\varepsilon)-(1-\varepsilon^2)^kh(-\varepsilon)+\int_{\varepsilon\leq |u|\leq 1}(1-u^2)^k \frac{d h(u)}{du}\right\}$$  
Setting  $k=0$, this simplifies to
$$ h_k(1)-h_k(-1)=\frac{1}{4\pi}(1-r^2) \left\{\frac{1}{1-2r(\cos (x-y)+r^2}+
\dfrac{1}{1-2r(\cos (x+y)+r^2} \right\}$$
On the other hand, we have
$$ \frac{1}{4\pi}(1-r^2)\int_{-1}^1\dfrac{2 r\sin x\sin ydu}{(1-2r(\cos xcos y+u\sin x\sin y)+r^2 )^{2}}
$$
$$=\frac{1}{4\pi}(1-r^2) \left\{\frac{1}{1-2r(\cos (x-y)+r^2}-
\dfrac{1}{1-2r(\cos (x+y)+r^2} \right\}.$$
From the previous computations, we obtain the explicit expression of the Poisson for $k=0$
$$  P_0(r,x,y)=\frac{1}{2\pi}\; \frac{1-r^2 }{1-2r(\cos (x-y)+r^2}$$
which is exactly the classical Poisson kernel. 
\par The Poisson kernel  $P_k(r, x,y)$
enjoys the following basic  properties:
\begin{eqnarray*}
P_k(r, x,y) \geq 0,\;\; P(r,x,y)= P(r, y,x )\;\text{and}\; 
\int_{\pi}^{\pi}P_k(r, x,y) dm_k(x)=\pi 2^{1-2k}\dfrac{\Gamma(2k+1)}{\Gamma(k+1)^2}r^k. 
\end{eqnarray*}
For a function $$f(x)=\sum_{-\infty}^{+\infty} a_n E_n^k(ix),$$ 
we define its Poisson integral by
$$f(r,x)=\sum_{-\infty}^{+\infty} a_nr^{|n|+k}E_n^k(ix),   \qquad 0\leq r<1, \; x\in[-\pi,\pi]$$ 
In particular, the Poisson integral admits the representation
$$f(r,x)= \int_{-\pi}^{\pi}P_k(r,x,y)f(y)dm_k(y)$$ 
In what follows, we present several classical results for   the Poisson integral $f(r,x)$. The arguments are essentially the same as those in \cite{MS}, and for brevity, we omit the proofs. These results illustrate the fundamental properties of the Poisson semigroup in the context of the measure $dm_k(x)$.
\begin{thm}
Let $f\in L^p(dm_k)$. Then
\begin{itemize}
\item[(a)]  $\|f(r,x)\|_{p,k}\leq \|f\|_{p,k}$,  $1\leq p\leq \infty$. 
\item[(b)]  $\|f(r,x)-f\|_{p,k} \rightarrow 0$ as $r\rightarrow 1$, $1\leq p< \infty$. 
\item[(c)]  $\lim_{r\rightarrow1}f(r,x)=f $ almost everywhere, $1\leq p\leq \infty$.  
\item[(d)]  $\|\sup_{r<1}f(r,x)\|_{p,k}\leq \|f\|_{p,k}$, $1< p\leq \infty$. 
\end{itemize}
\end{thm}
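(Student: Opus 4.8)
The plan is to regard the family of operators $f\mapsto f(r,\cdot)$, $0\le r<1$, as a symmetric, positivity--preserving contraction semigroup and to deduce (a)--(d) from the three kernel properties recorded just above, following the template of \cite{MS}. The facts I will use repeatedly are $P_k(r,x,y)\ge 0$, the symmetry $P_k(r,x,y)=P_k(r,y,x)$, and the mass identity $\int_{-\pi}^{\pi}P_k(r,x,y)\,dm_k(y)=r^{k}\le 1$ (which for $k=0$ is the classical normalization). For (a) I would treat the endpoints and interpolate: for $p=\infty$ the bound $|f(r,x)|\le\|f\|_{\infty,k}\int_{-\pi}^{\pi}P_k(r,x,y)\,dm_k(y)=r^{k}\|f\|_{\infty,k}$ is immediate, while for $p=1$ Tonelli's theorem and the symmetry of the kernel give $\int_{-\pi}^{\pi}|f(r,x)|\,dm_k(x)\le\int_{-\pi}^{\pi}|f(y)|\Big(\int_{-\pi}^{\pi}P_k(r,x,y)\,dm_k(x)\Big)dm_k(y)=r^{k}\|f\|_{1,k}$. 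For $1<p<\infty$ I would apply Jensen's inequality to the sub--probability measure $d\mu_x(y)=r^{-k}P_k(r,x,y)\,dm_k(y)$ and then integrate in $x$, obtaining in every case the contraction $\|f(r,\cdot)\|_{p,k}\le r^{k}\|f\|_{p,k}\le\|f\|_{p,k}$.

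For (b) the decisive observation is that convergence is transparent on the dense subspace of finite combinations $Q=\sum_{|n|\le N}a_nE_n^k(i\cdot)$, for which $Q(r,x)=\sum_{|n|\le N}a_nr^{|n|+k}E_n^k(ix)\to Q(x)$ uniformly as $r\to1$, since each factor $r^{|n|+k}\to1$. Given $f\in L^p(dm_k)$ with $1\le p<\infty$ and $\eps>0$, I would pick such a $Q$ with $\|f-Q\|_{p,k}<\eps$ and estimate $\|f(r,\cdot)-f\|_{p,k}\le\|(f-Q)(r,\cdot)\|_{p,k}+\|Q(r,\cdot)-Q\|_{p,k}+\|Q-f\|_{p,k}$; bounding the first term by (a) and letting $r\to1$ in the uniformly vanishing middle term yields $\|f(r,\cdot)-f\|_{p,k}\to0$.

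The step (d) is the one I expect to be the main obstacle. Its cleanest route uses the multiplier structure: since $f\mapsto f(r,\cdot)$ multiplies the $E_n^k(i\cdot)$--coefficient by $r^{|n|+k}$, one has $(f(r,\cdot))(s,\cdot)=f(rs,\cdot)$, so with $r=e^{-t}$ the operators $T_t$ form a semigroup that is self--adjoint on $L^2(dm_k)$ (real symmetric kernel), positivity preserving ($P_k\ge0$), and an $L^p$--contraction for all $p$ by (a). Writing $T_t=e^{-kt}S_t$, the rescaled operators $S_t$ have kernel $r^{-k}P_k(r,x,y)\ge0$ of total mass $1$, hence $\{S_t\}$ is a conservative symmetric Markovian semigroup; Stein's maximal theorem for such semigroups then gives $\big\|\sup_{t>0}|S_tf|\big\|_{p,k}\le C_p\|f\|_{p,k}$ for $1<p\le\infty$, and since $\sup_{r<1}|f(r,x)|=\sup_{t>0}e^{-kt}|S_tf(x)|\le\sup_{t>0}|S_tf(x)|$ the maximal bound for $f(r,x)$ follows. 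A more hands--on alternative, closer to the explicit estimates of \cite{MS}, is to establish a uniform pointwise bound $\sup_{0\le r<1}P_k(r,x,y)\lesssim\Phi(\rho(x,y))$ by a decreasing profile in the natural distance $\rho$, so that $\sup_{r<1}|f(r,x)|\lesssim\mathcal Mf(x)$ for the Hardy--Littlewood maximal function relative to the doubling measure $dm_k$, and then invoke the $L^p$--boundedness of $\mathcal M$; the delicate point there is controlling the singularity of the denominator $1-2r(\cos x\cos y+u\sin x\sin y)+r^2$ uniformly in $r$ near the diagonal $x=y$.

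Finally, (c) would follow from (d) by the standard maximal--function argument. The oscillation $\Omega f(x)=\limsup_{r\to1}|f(r,x)-f(x)|$ vanishes identically for polynomials $Q$, and for general $f\in L^p(dm_k)$ and any such $Q$ one has the pointwise bound $\Omega f=\Omega(f-Q)\le\sup_{r<1}|(f-Q)(r,\cdot)|+|f-Q|$; taking $L^p$--norms and using (d) gives $\|\Omega f\|_{p,k}\le C_p'\|f-Q\|_{p,k}$, which can be made arbitrarily small by density, so $\Omega f=0$ almost everywhere and $f(r,x)\to f(x)$ a.e.
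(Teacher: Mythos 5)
The paper never actually proves this theorem --- it defers entirely to \cite{MS} (``the arguments are essentially the same\dots we omit the proofs''), so your write-up has to stand on its own. Parts (a) and (b) are correct: the mass identity you use, $\int_{-\pi}^{\pi}P_k(r,x,y)\,dm_k(y)=r^{k}$, is the right one (it follows from orthogonality and $\gamma_0^k\|E_0^k\|_{2,k}^2=1$; note the constant displayed in the paper at this point is off by the factor $m_k([-\pi,\pi])$, and your version is the one that makes the contraction work), and density of the span of the $E_n^k(i\cdot)$ in $L^p$, $p<\infty$, reduces to ordinary trigonometric polynomials via the triangular expansion $E_n^k(z)=e^{nz}+\sum_{j\triangleleft n}c_{n,j}e^{jz}$. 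Your route to (d) is genuinely different from the explicit kernel-domination arguments of \cite{MS}: the Chapman--Kolmogorov identity does hold, since by orthogonality $\int_{-\pi}^{\pi} P_k(r,x,z)P_k(s,z,y)\,dm_k(z)=\sum_{n}\gamma_n^k r^{|n|+k}s^{|n|+k}E_n^k(ix)E_n^k(-iy)=P_k(rs,x,y)$, so $S_t=e^{kt}T_{e^{-t}}$ is indeed a symmetric, positivity-preserving, conservative semigroup which is an $L^p$-contraction for all $p$ (by your refined bound $\|T_rf\|_{p,k}\le r^k\|f\|_{p,k}$), and Stein's maximal theorem applies. Two caveats: it yields $\|\sup_{r<1}|f(r,\cdot)|\|_{p,k}\le C_p\|f\|_{p,k}$ with $C_p>1$ --- the constant $1$ as literally printed in (d) is unattainable even for $k=0$ (take $f$ an indicator function), so (d) must be read with a constant, exactly as in \cite{MS} --- and, more importantly, it gives no endpoint information.

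That second caveat is where the genuine gap lies, in (c). Your argument uses (d) (hence needs $p>1$) and density of polynomials (hence needs $p<\infty$), so it proves a.e.\ convergence only for $1<p<\infty$. The case $p=\infty$ is harmless, since $m_k$ is finite and $L^\infty\subset L^2$, so a.e.\ convergence is inherited from $p=2$; but you do not say this, and your argument as written fails there because polynomials are not dense in $L^\infty$. The real problem is $p=1$, which the statement includes: Stein's maximal theorem carries no weak-type $(1,1)$ information, and without a weak-type $(1,1)$ bound for $f^*(x)=\sup_{r<1}|f(r,x)|$ the density argument collapses in $L^1$ --- smallness of $\|f-Q\|_{1,k}$ cannot be converted into smallness of $m_k(\{\Omega(f-Q)>\lambda\})$. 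Closing this requires precisely what you relegated to a ``hands-on alternative'': a pointwise domination, uniform in $r$, of $r^{-k}P_k(r,x,y)$ by a decreasing profile integrable for the doubling measure $dm_k$, giving $f^*\lesssim \mathcal{M}_{m_k}f$ with $\mathcal{M}_{m_k}$ the Hardy--Littlewood maximal operator of $dm_k$, which is weak $(1,1)$. In \cite{MS} this kernel comparison is the heart of the proof of the maximal and a.e.-convergence theorems; in your plan it is optional, and that is the missing step.
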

   
\section{Convolution structure and fractional integration.}

In this section, we introduce a notion of convolution product associated with the  Heckman–Opdam polynomials  $E_n^k(ix)$ which arises from the product formula expressing the product of two polynomials of the same degree as an integral kernel.
\par Our starting point is the product formula satisfied by the ultraspherical polynomials.
(see \cite{G}).
For $x,y\in[-\pi,\pi]$, we have 
$$\frac{C_n^k(\cos x)}{C_n^k(1)}\frac{C_n^k(\cos y)}{C_n^k(1)}=\dfrac{\Gamma(k+1/2)}{\Gamma(k)\sqrt{\pi}}\int_{-1}^1\dfrac{C_n^k(\cos x\cos y+u\sin x\sin y)}{C_n^k(1)}(1-u^2)^{k-1} du.$$
 Invoking the relation between $P_n^k$ and $C_n^k$
 provided by (\ref{pc}) we rewrite the above expression accordingly, 
 \begin{equation}\label{ppw}
 \frac{P_n^k(ix)}{P_n^k(0)}\frac{P_n^k(iy)}{P_n^k(0)}= \dfrac{\Gamma(k+1/2)}{\Gamma(k)\sqrt{\pi}}\int_{-1}^1\frac{P_n^k(i\arccos(\cos x\cos y+u\sin x\sin y))}{P_n^k(0)} (1-u^2)^{k-1} du.
 \end{equation}
If  $\sin x\sin y\neq 0$  then  making the substitution $$z=\arccos\Big(\cos x\cos y+u\sin x\sin y\Big),$$
 the integral becomes
 \begin{equation}\label{pw}
 \frac{P_n^k(ix)}{P_n^k(0)}\frac{P_n^k(iy)}{P_n^k(0)}= \int_{-\pi}^\pi\frac{P_n^k(iz)}{P_n^k(0)} W_k(x,y,z) dm_k(z). 
 \end{equation}
where $W_k$ is given by on $[-\pi,\pi]^3$ by
\begin{eqnarray*}
 && W_k(x,y,z)\\ &&\qquad =\dfrac{\Gamma(k+1/2)}{2\Gamma(k)\sqrt{\pi}}
 \frac{\Big\{ (\cos z-\cos(x+y))(\cos(x-y)-cos z\Big\}^{k-1}}
 {\Big|\sin x\sin y\sin z\Big|^{2k-1}}\\
 & &\qquad=4^{k-1}\dfrac{\Gamma(k+1/2)}{2\Gamma(k)\sqrt{\pi}}\\&&
 \frac{\Big\{ \sin\left( \dfrac{x+y+z}{2}\right)\sin\left( \dfrac{x+y-z}{2}\right)
 \sin\left( \dfrac{x-y+z}{2}\right)\sin\left( \dfrac{-x+y+z}{2}\right)\Big\}^{k-1}}
 {\Big|\sin x\sin y\sin z\Big|^{2k-1}} 
\end{eqnarray*}
 if $\cos (|x|+|y|)< \cos z< cos (|x|-|y|)$
 and $W_k(x,y,z)=0$  otherwise.
 
 \begin{lem}\label{03}
Let  $x,y,z\in [0,\pi]$. Then 
 $$\cos(x+y)\leq \cos z\leq \cos (x-y) 
  \Longleftrightarrow \cos(z+y)\leq \cos x\leq \cos (z-y).$$
\end{lem}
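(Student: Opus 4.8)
The plan is to show that each side of the claimed equivalence is in fact equivalent to one and the same symmetric system of inequalities in the three variables $x,y,z$, after which the equivalence is immediate by symmetry.

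First I would reduce the two-sided inequalities to monotonicity statements on $[0,\pi]$. Since $x,y,z\in[0,\pi]$ and $\cos$ is strictly decreasing on $[0,\pi]$, and since $\cos(x-y)=\cos|x-y|$ with $|x-y|\in[0,\pi]$, the right-hand inequality $\cos z\le\cos(x-y)$ is equivalent to $z\ge|x-y|$, i.e.\ to the two triangle inequalities $x\le y+z$ and $y\le x+z$. The delicate part is the left-hand inequality $\cos(x+y)\le\cos z$, because $x+y$ may exceed $\pi$, so I cannot simply invoke monotonicity of $\cos$ on $[0,\pi]$. I expect this to be the main obstacle: extracting two separate constraints from a single cosine inequality across the non-monotone range of $\cos$.

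To handle it, I would analyze $\cos$ on the full interval $[0,2\pi]$. Setting $s=x+y\in[0,2\pi]$, the two solutions of $\cos s=\cos z$ in $[0,2\pi]$ are $s=z$ and $s=2\pi-z$, and since $\cos$ decreases on $[0,\pi]$ and increases on $[\pi,2\pi]$, the set $\{s:\cos s\le\cos z\}$ is exactly the arc $z\le s\le 2\pi-z$. Hence $\cos(x+y)\le\cos z$ is equivalent to the conjunction
\[
z\le x+y \qquad\text{and}\qquad x+y+z\le 2\pi .
\]
This is the only place where the non-monotonicity of cosine must be treated with care.

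Combining the two reductions, the left-hand side of the lemma is equivalent to the system
\[
x\le y+z,\quad y\le x+z,\quad z\le x+y,\quad x+y+z\le 2\pi .
\]
Since this system is invariant under any permutation of $x,y,z$, and since the right-hand inequalities $\cos(z+y)\le\cos x\le\cos(z-y)$ are obtained from the left-hand ones simply by interchanging the roles of $x$ and $z$, applying exactly the same reduction to them produces the identical symmetric system. The two conditions are therefore equivalent, which completes the proof.
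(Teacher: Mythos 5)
Your proof is correct and rests on the same mechanism as the paper's: converting both cosine inequalities into linear constraints via monotonicity, with the condition $x+y+z\le 2\pi$ capturing the non-monotonicity of $\cos$ past $\pi$ (the paper encodes this as $\arccos(\cos(x+y))=\pi-|x+y-\pi|=\min(x+y,\,2\pi-(x+y))$). The one organizational difference is in your favor: by reducing both sides to the single permutation-symmetric system $x\le y+z$, $y\le x+z$, $z\le x+y$, $x+y+z\le 2\pi$, you obtain both directions of the equivalence simultaneously, whereas the paper explicitly derives only the left-to-right implication and leaves the converse to the implicit $x\leftrightarrow z$ symmetry of its argument.
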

\begin{proof}
Since the function $\arccos$ is decreasing,  we have 
$$\cos(x+y)\leq \cos z\leq \cos (x-y) 
 \Longleftrightarrow |x-y|\leq z\leq \arccos(\cos(x+y))=\pi-|x+y-\pi|.$$
Observe that 
 $$\pi-|x+y-\pi|=\min(x+y,2\pi-(x+y)).$$
From this, we deduce
$$|x-y|\leq z\leq x+y$$
 which implies that $|y-z|\leq x\leq y+z$.
Moreover, since $x+y+z\leq 2\pi$, it follows that
 $x\leq 2\pi-(z+y)$.
Therefore, Combining the two upper bounds for $x$, we obtain
$$|z-y|\leq x\leq \min(z+y,2\pi-(z+y))=\pi-|z+y-\pi|.$$
Now, as $\cos$ is decreasing  on $[0,\pi]$, we get that 
$$\cos(z+y)\leq \cos x\leq \cos (z-y).$$
This concludes the proof..
 \end{proof}
 The following proprieties of $W_k$ are immediate.
 \begin{prop}
   For all  $x,y,z\in[-\pi,\pi]$ the following hold:
 \begin{itemize}
\item[(i)] Non-negativity: $W_k(x,y,z)\geq 0$.
\item[(ii)] Symmetry: $W_k(x,y,z)=W_k(y,x,z)=W_k(x,z,y)$.
\item[(iii)]Normalization: For all $y,z \in[-\pi,\pi]$ ,$\displaystyle{\int_{-\pi}^\pi}W_k(y,x,z) dm_k(x)=1$.
 \end{itemize}
 \end{prop}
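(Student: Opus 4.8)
The plan is to establish the three assertions in turn, exploiting the two equivalent expressions for $W_k$ displayed above: the rational form in $\cos z,\cos(x\pm y)$ for the positivity, and the product-of-sines form for the symmetry. Throughout I would use that every factor of $W_k$, together with the support condition, is even in each of $x,y,z$, so that it suffices to argue on $[0,\pi]^3$; on this cube the support condition reads $\cos(x+y)<\cos z<\cos(x-y)$, and the two forms of the numerator are linked by the product-to-sum identities $2\sin\frac{x+y+z}{2}\sin\frac{x+y-z}{2}=\cos z-\cos(x+y)$ and $2\sin\frac{x-y+z}{2}\sin\frac{-x+y+z}{2}=\cos(x-y)-\cos z$, which account for the factor $4^{k-1}$.

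For (i), on $[0,\pi]^3$ the support condition forces $\cos z-\cos(x+y)>0$ and $\cos(x-y)-\cos z>0$, so the bracket $(\cos z-\cos(x+y))(\cos(x-y)-\cos z)$ is strictly positive and its $(k-1)$-th power is well defined and positive; the denominator $\abs{\sin x\sin y\sin z}^{2k-1}$ is positive and the prefactor $4^{k-1}\Gamma(k+1/2)/(2\Gamma(k)\sqrt\pi)$ is positive for $k>0$. Off the support $W_k=0$ by definition, so $W_k\ge0$ everywhere.

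For (ii) I would separate the symmetry of the algebraic expression from the symmetry of the support. In the product-of-sines form the four factors $\sin\frac{x+y+z}{2},\sin\frac{x+y-z}{2},\sin\frac{x-y+z}{2},\sin\frac{-x+y+z}{2}$ are merely permuted by the transpositions $x\leftrightarrow y$ and $x\leftrightarrow z$ (in each case two of the four factors swap and the other two are fixed), and $\abs{\sin x\sin y\sin z}$ is plainly invariant; since these two transpositions generate $S_3$, the formula is fully permutation-symmetric. The support condition is trivially invariant under $x\leftrightarrow y$, while its invariance under $x\leftrightarrow z$ (with $y$ fixed) is exactly the content of Lemma \ref{03}. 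Hence both the support and the formula are $S_3$-invariant, giving $W_k(x,y,z)=W_k(y,x,z)=W_k(x,z,y)$.

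For (iii) I would specialise the product formula \eqref{pw} to $n=0$. By \eqref{13} we have $E_0^k=1$, hence $P_0^k\equiv1$ and $P_0^k(0)=1$, so both normalised factors on the left of \eqref{pw} equal $1$ and the identity collapses to $\int_{-\pi}^\pi W_k(x,y,z)\,dm_k(z)=1$ for all $x,y$. To reach the stated integral over the middle slot I would use part (ii) to write $W_k(y,x,z)=W_k(y,z,x)$, which places the integration variable $x$ in the third slot, whence $\int_{-\pi}^\pi W_k(y,x,z)\,dm_k(x)=\int_{-\pi}^\pi W_k(y,z,x)\,dm_k(x)=1$. The only step that is not pure bookkeeping is the invariance of the support under $x\leftrightarrow z$, and this is supplied by Lemma \ref{03}; I would therefore expect the main care to lie in the evenness reductions used in (i) and in applying Lemma \ref{03} with the correct assignment of variables in (ii).
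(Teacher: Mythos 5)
Your proof is correct and fills in exactly the argument the paper leaves implicit (it calls the proposition ``immediate''): positivity from the rational form of the numerator on the support, symmetry from the sine--product form together with Lemma \ref{03} for the invariance of the support under $x\leftrightarrow z$, and normalization by taking $n=0$ in the product formula (\ref{pw}) and then permuting slots via (ii). The only caveat --- one shared by the paper's own statement --- is that (iii) genuinely requires $\sin y\sin z\neq 0$, since $W_k(y,x,z)$ vanishes identically in $x$ when $y$ or $z$ lies in $\{0,\pm\pi\}$ (these degenerate cases are precisely the ones the paper later handles with Dirac measures).
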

\par Next, for $n\in \mathbb{Z}$ we let
$$\mathcal{E}_n^{k}(ix)=\frac{E_n^k(ix)}{E_n^k(0)}.$$
In what follows, we shall prove that $\mathcal{E}_n^{k}$ satisfies the following product formula.
\begin{thm} For $x,y\in [-\pi,\pi]$, we have 
 $$\mathcal{E}_n^k(ix)\mathcal{E}_n^k(iy)= \int_{-\pi}^{\pi}\mathcal{E}_n^k(iz) d\mu_k^{(x,y)}(z)$$
with
  $$
d\mu_k^{(x,y)}(z)=
 \begin{cases}
 \mathcal{ W}_k(x,y,z)dm_k(z) &\text{if}\; \sin x\sin y\neq 0,
 \\ d\delta_{x}   &\text{if}\; y=0,
 \\ d\delta_{x\pm \pi}   &\text{if}\;  y=\pm \pi,
 \\\ d \delta_{y} &\text{if}\;  x=0
  \\ d\delta_{y\pm\pi}   &\text{if}\;  x=\pm \pi,
 \end{cases}
 $$
 where $\delta$ denotes is the Dirac measure and 
 \begin{eqnarray*}
 \mathcal{ W}_k(x,y,z)=&&4 \exp\Big(\dfrac{1}{2}i(x+y-z)\Big)\\&&
  \dfrac{\sin\left(\dfrac{x+y+z}{2}\right)\sin\left(\dfrac{-x+y+z}{2}\right)\sin\left(\dfrac{x-y+z}{2}\right)}{\sin x\sin y\sin z}\; W_k(x,y,z).
 \end{eqnarray*}
\end{thm}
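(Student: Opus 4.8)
The plan is to deduce the non-symmetric product formula from the symmetric one (\ref{pw}) by observing that $\mathcal{E}_n^k(ix)$ is obtained from the normalized Gegenbauer polynomial $\mathcal{P}_n^k(ix):=P_n^k(ix)/P_n^k(0)=C_n^k(\cos x)/C_n^k(1)$ by a single first-order operator. Substituting $x\mapsto ix$ in (\ref{jacobi}) and using (\ref{pn}) in the form $\frac{d}{dx}P_n^k(ix)=-2n\sin x\,P_{n-1}^{k+1}(ix)$ gives the clean identity $E_n^k(ix)=P_n^k(ix)-\frac{i}{n}\frac{d}{dx}P_n^k(ix)$, hence $\mathcal{E}_n^k(ix)=(1-\tfrac{i}{n}\partial_x)\mathcal{P}_n^k(ix)$. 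First I would record the consequences that $\mathcal{P}_n^k(iz)$ is even in $z$ while $-\tfrac{i}{n}\partial_z\mathcal{P}_n^k(iz)=\tfrac{2i\sin z}{P_n^k(0)}P_{n-1}^{k+1}(iz)$ is odd, so these are exactly the even and odd parts of $\mathcal{E}_n^k(iz)$.

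Since $(1-\tfrac{i}{n}\partial_x)$ and $(1-\tfrac{i}{n}\partial_y)$ act on separate variables, the product factorises as
\[
\mathcal{E}_n^k(ix)\mathcal{E}_n^k(iy)=\Bigl(1-\tfrac{i}{n}\partial_x\Bigr)\Bigl(1-\tfrac{i}{n}\partial_y\Bigr)\bigl[\mathcal{P}_n^k(ix)\mathcal{P}_n^k(iy)\bigr].
\]
I would insert (\ref{pw}) and differentiate under the integral sign (justified on the open set $\sin x\sin y\neq0$, where $W_k$ is smooth), producing the honest identity $\mathcal{E}_n^k(ix)\mathcal{E}_n^k(iy)=\int_{-\pi}^{\pi}\mathcal{P}_n^k(iz)\,(1-\tfrac{i}{n}\partial_x)(1-\tfrac{i}{n}\partial_y)W_k(x,y,z)\,dm_k(z)$. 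Here it is convenient to work with the pre-substitution representation over $u\in[-1,1]$, where $\cos z=\cos x\cos y+u\sin x\sin y$, so that $\partial_x,\partial_y$ reach $\mathcal{P}_n^k(iz)$ only through the chain rule $\sin z\,\partial_x z=\sin x\cos y-u\cos x\sin y$ and its $x\leftrightarrow y$ analogue.

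The core of the proof is to rewrite this $z$-integral, whose integrand carries the even function $\mathcal{P}_n^k(iz)$, as an integral of $\mathcal{E}_n^k(iz)$ against one $n$-independent density. The mechanism is an integration by parts in the integration variable combined with a second use of (\ref{pn}): re-expressing $\partial_z\mathcal{P}_n^k(iz)$ as $-2n\sin z\,P_{n-1}^{k+1}(iz)/P_n^k(0)$ rebuilds precisely the odd part of $\mathcal{E}_n^k(iz)$ inside the integrand, the boundary terms vanishing because $|\sin z|^{2k}$ annihilates them. The doubly-differentiated term $\partial_x\partial_y W_k$ is finite despite the $1/n^2$ prefactor, since $\partial_x\mathcal{P}_n^k(ix)\,\partial_y\mathcal{P}_n^k(iy)=4n^2\sin x\sin y\,P_{n-1}^{k+1}(ix)P_{n-1}^{k+1}(iy)/P_n^k(0)^2$; it contributes the $(\mathrm{odd},\mathrm{odd})$ piece, for which the product formula (\ref{pw}) at parameter $k+1$ is the natural tool. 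Collecting all contributions, the kernel should collapse to $\mathcal{W}_k$. I expect the explicit trigonometric reduction to be the main obstacle: converting the products of sines into the half-angle form $\sin\frac{x+y+z}{2}$, $\sin\frac{-x+y+z}{2}$, $\sin\frac{x-y+z}{2}$, producing the exponential prefactor $e^{\frac{i}{2}(x+y-z)}$ (which encodes the non-symmetry), and checking that the support of the result coincides with that of $W_k$, i.e. with the region described in Lemma \ref{03}.

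Finally, I would dispatch the degenerate cases $\sin x\sin y=0$ directly. When $y=0$ we have $\mathcal{E}_n^k(0)=1$, so the claim reduces to the tautology $\mathcal{E}_n^k(ix)=\int_{-\pi}^{\pi}\mathcal{E}_n^k(iz)\,d\delta_x(z)$, and the case $x=0$ is identical by symmetry. When $y=\pm\pi$, the parity $C_n^k(-t)=(-1)^nC_n^k(t)$ gives $\mathcal{E}_n^k(\pm i\pi)=(-1)^n$ and $\mathcal{E}_n^k\bigl(i(x\pm\pi)\bigr)=(-1)^n\mathcal{E}_n^k(ix)$, so the identity collapses to the Dirac mass $d\delta_{x\pm\pi}$ (with the sign chosen so that $x\pm\pi\in[-\pi,\pi]$), and symmetrically for $x=\pm\pi$.
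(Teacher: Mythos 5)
Your plan is, in substance, the paper's own proof. The operator identity $\mathcal{E}_n^k(ix)=(1-\tfrac{i}{n}\partial_x)\mathcal{P}_n^k(ix)$ is exactly the even/odd decomposition $\mathcal{E}_n^k=\mathcal{E}_{n,e}^k+\mathcal{E}_{n,o}^k$ the paper works with (via (\ref{jacobi}) and (\ref{pn})), and expanding $(1-\tfrac{i}{n}\partial_x)(1-\tfrac{i}{n}\partial_y)$ yields the same four blocks the paper computes: the (even,even) block from (\ref{pw}); the (odd,odd) block, your $\partial_x\partial_y$ term, from the product formula at parameter $k+1$ (the paper's $I_{k,1}+I_{k,2}$, with an integration by parts in $u$ and (\ref{pn})); and the two cross blocks via differentiation of the $u$-form (\ref{ppw}) plus integration by parts. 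Your handling of the degenerate cases $\sin x\sin y=0$ is correct, and in fact more explicit than the paper, which silently omits them.

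There is, however, a genuine gap: your argument only covers $n\geq 1$. The identity $E_n^k(ix)=P_n^k(ix)-\tfrac{i}{n}\tfrac{d}{dx}P_n^k(ix)$ is (\ref{jacobi}), valid for $n\geq 1$; for negative index one has instead, from the explicit expressions at the end of Section 2, $\mathcal{E}_{-m}^k(ix)=(1+\tfrac{i}{m+2k}\partial_x)\mathcal{P}_m^k(ix)$, $m\geq1$, with a different constant, so your computation does not apply verbatim, and it is not a priori clear that the collection step then produces the same $n$-independent, non-symmetric kernel $\mathcal{W}_k$ (this is precisely where the complex, asymmetric nature of $\mathcal{W}_k$ matters). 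The paper dispatches $n<0$ in one line via the reflection identity (\ref{12}): writing $\mathcal{E}_{-m}^k(iz)=e^{iz}\mathcal{E}_{m+1}^k(-iz)$, applying the positive-index formula at $(-x,-y)$, substituting $z\mapsto -z$, and using that $W_k$ is even in each variable together with the covariance $\mathcal{W}_k(-x,-y,-z)=e^{-i(x+y-z)}\mathcal{W}_k(x,y,z)$; you need to add this step (or rerun your scheme with the modified operator). Two smaller points: the case $n=0$, i.e. $\int_{-\pi}^{\pi}\mathcal{W}_k(x,y,z)\,dm_k(z)=1$, also needs a separate word, since both your operator and the cross-term identity divide by $n$; and the boundary terms in your integration by parts vanish because of the factor $(1-u^2)^k$ at $u=\pm1$ (equivalently the $k$-th power of $(\cos z-\cos(x+y))(\cos(x-y)-\cos z)$ at the edge of the support of $W_k(x,y,\cdot)$), not because of $|\sin z|^{2k}$, whose zeros are in general not the endpoints of that support.
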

\begin{proof}
We  consider  only the case  $n\geq 0$, since for  $n<0$,  the formula can be derived using  (\ref{12}). For convenience we decompose $\mathcal{E}_n^{k}(ix)$ into its even and odd parts
$$\mathcal{E}_n^{k}(ix)=\mathcal{E}_{n,e}^{k}(ix)+\mathcal{E}_{n,o}^{k}(ix).$$
Suppose   $\sin x\sin y\neq 0$. From (\ref{pw}) we have
\begin{eqnarray*}
\mathcal{E}_{n,e}^{k}(ix)\mathcal{E}_{n,e}^{k}(iy)= \frac{P_n^k(ix)}{P_n^k(0)}\frac{P_n^k(iy)}{P_n^k(0)}&=& \int_{-\pi}^\pi\frac{P_n^k(iz)}{P_n^k(0)} \;W_k(x,y,z) dm_k(z) \\&=&\int_{-\pi}^\pi\mathcal{E}_{n}^{k}(ix)\;W_k(x,y,z) dm_k(z).
\end{eqnarray*} 
 Similarly, in view of (\ref{ppw}), we write  
\begin{eqnarray*}
&&\mathcal{E}_{n,o}^{k}(ix)\mathcal{E}_{n,o}^{k}(iy)=-4\sin x\sin y
 \frac{P_{n-1}^{k+1}(ix)}{P_{n}^{k}(0)}\frac{P_{n-1}^{k+1}(iy)}{P_{n}^{k}(0)}
\\&=&
-\frac{\Gamma(k+1/2)}{\Gamma(k+1)\sqrt{\pi}}   (n+2k)\int_{-1}^1  
\frac{P_{n-1}^{k+1}(i\arccos(\cos x\cos y+u\sin x\sin y)) }{P_{n}^{k}(0)}
\sin x\sin y(1-u^2)^{k} du
\\&&=-\frac{\Gamma(k+1/2)}{\Gamma(k+1)\sqrt{\pi}} n\int_{-1}^1  
\frac{P_{n-1}^{k+1}(i\arccos(\cos x\cos y+u\sin x\sin y)) }{P_{n}^{k}(0)}
\sin x\sin y(1-u^2)^{k} du
\\&&-\frac{2\Gamma(k+1/2)}{\Gamma(k)\sqrt{\pi}}  \int_{-1}^1  
\frac{P_{n-1}^{k+1}(i\arccos(\cos x\cos y+u\sin x\sin y)) }{P_{n}^{k}(0)}
\sin x\sin y(1-u^2)^{k} du.
\\&& =I_{k,1}+I_{k,2}.
\end{eqnarray*}
Observe that when  
$ z=\arccos(\cos x\cos y+u\sin x\sin y)$
we have that
$$\sin x\sin y=-\sin z \dfrac{dz}{du}.$$
Then integration by parts in $I_{k,1}$ together with the (\ref{pn}) give 
\begin{eqnarray*}
I_{k,1}&=&-\frac{\Gamma(k+1/2)}{\Gamma(k)\sqrt{\pi}} \int_{-1}^1 
 \frac{P_n^k(i\arccos(\cos x\cos y+u\sin x\sin y))}{P_n^k(0)}u(1-u^2)^{k-1}
\\&=&-\int_{-\pi}^{\pi}
\mathcal{E}_n^k(iz) \dfrac{\cos z-cos x\cos y}{\sin x\sin y}W_k(x,y,z)dm_k(z).
\end{eqnarray*}
For  $I_{k,2}$ we have 
\begin{eqnarray*}
I_{k,2}&=&-i\int_{-\pi}^{\pi} 
\mathcal{E}_n^k(iz)
\frac{ (\cos z-\cos(x+y))(\cos(x-y)-cos z)}
 {|\sin x\sin y| \sin z} W_k(x,y,z)dm_k(z)
  \\&=&4i\int_{-\pi}^{\pi} 
\mathcal{E}_n^k(iz)
\frac{ \sin(\frac{x+y+z}{2})  \sin(\frac{x-y+z}{2}) \sin(\frac{-x+y+z}{2}) \sin(\frac{x+y-z}{2})}
 {\sin x\sin y \sin z} W_k(x,y,z)dm_k(z).
\end{eqnarray*}
On the other hand, using the identity
$$-\frac{1}{n}\left(\dfrac{d}{dx}+\dfrac{d}{dy}\right)P_{n}^{k}(i\arccos(\cos x\cos y+u\sin x\sin y)$$
$$=(1-u)\sin(x+y)P_{n-1}^{k+1}(i\arccos(\cos x\cos y+u\sin x\sin y)$$
we obtain
\begin{eqnarray*}
&&\mathcal{E}_{n,o}^{k}(ix)\mathcal{E}_{n,e}^{k}(iy)\\&=&
 2i \sin x\frac{P_{n-1}^{k+1}(ix)}{P_{n}^{k}(0)}\frac{P_{n}^{k}(iy)}{P_{n}^{k}(0)}+ 2i\sin y \frac{P_{n-1}^{k+1}(ix)}{P_{n}^{k}(0)}\frac{P_{n}^{k}(iy)}{P_{n}^{k}(0)}
 \\&=&-\frac{i}{n}\left(\dfrac{d}{dx}+\dfrac{d}{dy}\right)\Big\{ \frac{P_{n}^{k}(ix)}{P_{n}^{k}(0)}\frac{P_{n}^{k}(iy)}{P_{n}^{k}(0)}\Big\}\\&=&-
i\dfrac{\Gamma(k+1/2)}{\Gamma(k)\sqrt{\pi}}
\int_{-1}^1\frac{P_{n-1}^{k+1}(i\arccos(\cos x\cos y+u\sin x\sin y))}{P_{n}^{k}(0)} \sin(x+y)(1-u) (1-u^2)^{k-1} du\\&=&
-\int_{-\pi}^{\pi} 
\mathcal{E}_n^k(iz)\dfrac{\sin (x+y)(\cos(x-y)-\cos z)}{\sin x\sin y \sin z}
  W_k(x,y,z)dz.
\end{eqnarray*} 
Summing all the terms yields the final expression for the kernel $\mathcal{W}_k(x,y,z)$, since 
\begin{eqnarray*}
&&1+ \dfrac{\cos z-cos x\cos y}{\sin x\sin y}-\dfrac{\sin (x+y)(\cos(x-y)-\cos z)}{\sin x\sin y \sin z}\\&&\qquad \qquad\qquad\qquad\qquad\qquad=\frac{(\cos(x-y)-\cos z)(s\sin z+sin(x+y)}{\sin x\sin y\sin z}\\&&\qquad\qquad\qquad\qquad\qquad\qquad=4\dfrac{ \sin(\frac{x+y+z}{2})  \sin(\frac{x-y+z}{2}) \sin(\frac{-x+y+z}{2}) \sin(\frac{x+y-z}{2})}
 {\sin x\sin y \sin z}.
\end{eqnarray*}
This completes the derivation of the desired formula.
 \end{proof}
 We point out the following remark, as it will be useful in subsequent arguments.
  From the following identity 
\begin{eqnarray*}
\dfrac{\sin (x+y)(\cos z-\cos(x-y))}{\sin x\sin y \sin z} =
 \dfrac{cos x\cos z-\cos y}{\sin x\sin z}+\dfrac{cos y\cos z-\cos x}{\sin y\sin z}
\end{eqnarray*}
we may rewrite 	$\mathcal{ W}_k (x,y,z)$ in the form
\begin{eqnarray*}
 &&\mathcal{ W}_k(x,y,z)=4 \exp\Big(\dfrac{1}{2}i(x+y-z)\Big)\\&&
\left\{1-  \dfrac{cos x\cos y-\cos z}{\sin x\sin y}+  \dfrac{cos x\cos z-\cos y}{\sin x\sin z}+\dfrac{cos y\cos z-\cos x}{\sin y\sin z}\right\}\; W_k(x,y,z).
 \end{eqnarray*}
In view of the  lemma \ref{03} we can state the following inequality
$$|\mathcal{W}_k(x,y,z)|\leq 16 W_k (x,y,z).$$
\par  We now introduce the translation operator 
 on $L^2([-\pi,\pi],dm_k(x))$. Let $x\in[-\pi,\pi]$. 
 For $f\in L^2([-\pi,\pi],dm_k(x))$ with the expansion
 $$f(x)=\sum_{n\in \mathbb{Z}}a_n \mathcal{E}_n^k(ix)$$
the translation of $f$ by $x$ is defined by 
$$\tau_x(f)(y)=\sum_{n\in \mathbb{Z}}a_n \mathcal{E}_n^k(ix) \mathcal{E}_n^k(iy)=
\int_{-\pi}^\pi f(z)d\mu_k^{(x,y)}(z).$$
We list below some basic properties of the translation operator, their verification is straightforward.
 \begin{prop}
  Let $x\in[\pi,-\pi]$. The operator $\tau_x$ satisfy
  \begin{itemize}
  \item[(i)] $\tau_x(\mathcal{E}_n^k(iy))=\mathcal{E}_n^k(ix)\mathcal{E}_n^k(iy)$.
  \item[(ii)]$\tau_x(f)(y)=\tau_y(f)(x)$, for all $y\in[\pi,-\pi]$.
  \item[(iii)]$\|\tau_x(f)(y)\|_{2,k}\leq \|f\|_{2,k}$, 
  for all $f\in L^2([-\pi,\pi],dm_k(x))$.
  \item[(iv)]  $f\in L^2([-\pi,\pi],dm_k(x))\cap  L^1([-\pi,\pi],dm_k(x))$
   and  $x\in[\pi,-\pi]$.
  $$\int_{-\pi}^\pi \tau_x(f)(y)dm_k(y)=\int_{-\pi}^\pi f(y)dm_k(y).$$
 \item[(v)] The operator $\tau_x$ can be extended to all $L^p([-\pi,\pi],dm_k(x))$, $1\leq p\leq\infty,$ with 
  $$\|\tau_x(f)(y)\|_{p,k}\leq 16\|f\|_{p,k}.$$
 \end{itemize}
 \end{prop}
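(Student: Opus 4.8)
\emph{Plan of proof.} Properties (i) and (ii) I would read off directly from the series definition $\tau_x(f)(y)=\sum_{n}a_n\,\mathcal{E}_n^k(ix)\mathcal{E}_n^k(iy)$: taking $f=\mathcal{E}_n^k(i\cdot)$, so that $a_m=0$ except $a_n=1$, gives (i), while the symmetry of the summand $\mathcal{E}_n^k(ix)\mathcal{E}_n^k(iy)$ under interchange of $x$ and $y$ gives (ii). The substance is in (iii)--(v), and I would treat the two boundedness statements by different mechanisms, since they carry different constants.

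For (v) I would argue at the level of the representing measure. From the bound $|\mathcal{W}_k(x,y,z)|\le 16\,W_k(x,y,z)$ recorded after the product-formula theorem, together with $W_k\ge 0$ and the normalization, the measure $W_k(x,y,\cdot)\,dm_k$ is a probability measure in $z$ for each fixed $(x,y)$. Hence
$$|\tau_x(f)(y)|\le \int_{-\pi}^{\pi}|f(z)|\,|d\mu_k^{(x,y)}(z)|\le 16\int_{-\pi}^{\pi}|f(z)|\,W_k(x,y,z)\,dm_k(z),$$
and Jensen's inequality applied to this probability measure gives $|\tau_x(f)(y)|^p\le 16^p\int_{-\pi}^{\pi}|f(z)|^p W_k(x,y,z)\,dm_k(z)$ for $1\le p<\infty$. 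Integrating in $y$, using Tonelli and the full symmetry of $W_k$ (so that $\int_{-\pi}^{\pi}W_k(x,y,z)\,dm_k(y)=1$), yields $\|\tau_x f\|_{p,k}^p\le 16^p\|f\|_{p,k}^p$; the case $p=\infty$ is the same estimate without Jensen. This simultaneously produces the bounded extension to each $L^p$, since the operator defined by the kernel agrees with the series definition on trigonometric polynomials, which are dense.

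For (iv) I would avoid the kernel and use orthogonality. Since $m_k$ is finite, $g\mapsto\int_{-\pi}^{\pi}g\,dm_k=(g,1)_k$ is continuous on $L^2(dm_k)$, and $\tau_x$ is bounded on $L^2$ by (iii), so the partial sums of $\tau_x f$ converge to it in $L^2$ and
$$\int_{-\pi}^{\pi}\tau_x(f)(y)\,dm_k(y)=\sum_{n}a_n\,\mathcal{E}_n^k(ix)\,(\mathcal{E}_n^k(i\cdot),1)_k.$$
Because $E_0^k\equiv 1$ and the $E_n^k(i\cdot)$ are mutually orthogonal, $(\mathcal{E}_n^k(i\cdot),1)_k=0$ for $n\neq 0$ and equals $m_k([-\pi,\pi])$ for $n=0$; using $\mathcal{E}_0^k\equiv 1$, the right-hand side collapses to $a_0\,m_k([-\pi,\pi])=\int_{-\pi}^{\pi}f\,dm_k$.

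The real content is (iii), whose constant is $1$ rather than $16$, so the total-variation estimate used for (v) cannot deliver it; this I expect to be the main obstacle. Here I would invoke Plancherel: since $\{E_n^k(i\cdot)\}$ is an orthogonal basis and $\tau_x$ multiplies the $n$-th coefficient by $\mathcal{E}_n^k(ix)$, one has $\|\tau_x f\|_{2,k}^2=\sum_n|a_n|^2\,|\mathcal{E}_n^k(ix)|^2\,\|\mathcal{E}_n^k\|_{2,k}^2$, so (iii) is \emph{equivalent} to the pointwise bound $|\mathcal{E}_n^k(ix)|\le 1$ for all $n$ and all $x$. Using (\ref{jacobi}) and (\ref{pc}) I would write $\mathcal{E}_n^k(i\theta)=\frac{C_n^k(\cos\theta)}{C_n^k(1)}+2i\,A_{n,k}\sin\theta\,C_{n-1}^{k+1}(\cos\theta)$ for an explicit positive constant $A_{n,k}$, and note $\mathcal{E}_n^k(-i\theta)=\overline{\mathcal{E}_n^k(i\theta)}$, so the required bound reads
$$\Bigl(\tfrac{C_n^k(\cos\theta)}{C_n^k(1)}\Bigr)^2+4A_{n,k}^2\sin^2\theta\,\bigl(C_{n-1}^{k+1}(\cos\theta)\bigr)^2\le 1.$$
I expect this to follow from a Tur\'an-type inequality for ultraspherical polynomials, exhibiting $1-|\mathcal{E}_n^k(i\theta)|^2$ as a nonnegative multiple of $\sin^2\theta$ (as one checks directly in low degrees); alternatively, and more structurally, from a positive integral representation $\mathcal{E}_n^k(i\theta)=\int_{-\pi}^{\pi}e^{in\phi}\,d\rho_\theta(\phi)$ with $\rho_\theta$ a probability measure, after which $|\mathcal{E}_n^k(i\theta)|\le 1$ is immediate and the equality at $\theta\in\{0,\pm\pi\}$ is transparent.
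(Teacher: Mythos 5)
Your treatment of (i), (ii), (iv) and (v) is sound: reading (i) and (ii) off the series, the total-variation bound $|\mathcal{W}_k|\le 16\,W_k$ plus normalization of $W_k$ and Jensen for (v), and the orthogonality computation for (iv) all work (for the record, the paper offers no proof at all here -- it declares the verification straightforward -- so there is nothing to compare line by line). The genuine gap is in (iii), which you yourself identify as the real content. Via Parseval you correctly reduce (iii) to the uniform bound $|\mathcal{E}_n^k(ix)|\le 1$ for all $n\in\mathbb{Z}$ and all $x$, but you then do not prove this bound: you verify it in low degrees and ``expect'' it to follow from a Tur\'an-type inequality, or from an unspecified positive integral representation. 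As written this is a conjecture, not a proof, and since the constant-$16$ kernel argument cannot produce the constant $1$, item (iii) is left entirely unestablished.

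The missing step is, however, already available in Section 2 of the paper, and it is one line. The paper records (citing Corollary 5.2 and Proposition 6.1 of Sahi) that the coefficients $c_{n,j}$ in $E_n^k(z)=e^{nz}+\sum_{j\triangleleft n}c_{n,j}\,e^{jz}$ are all real and non-negative. Hence
$$
|E_n^k(ix)|=\Big|e^{inx}+\sum_{j\triangleleft n}c_{n,j}\,e^{ijx}\Big|\le 1+\sum_{j\triangleleft n}c_{n,j}=E_n^k(0),
$$
that is, $|\mathcal{E}_n^k(ix)|\le 1$ everywhere. This is exactly the ``positive representation'' you hoped existed: $\mathcal{E}_n^k(ix)$ is a convex combination of the exponentials $e^{ijx}$ with weights $c_{n,j}/E_n^k(0)$, so the bound is immediate and equality at $x\in\{0,\pm\pi\}$ is transparent. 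Inserting this observation closes the gap, and with it your proofs of (iii)--(v), hence of the whole proposition, are complete; no Tur\'an-type inequality for ultraspherical polynomials is needed.
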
 
 We define  the convolution product  $f\star_k g$ as  
$$f\star_k g(x)=\int_{-\pi}^{\pi}  \tau_xf(-y)g(y)dm_k(y).$$
This convolution product enjoys the following immediate properties (follow from standard classical arguments)
\begin{prop}
Let $f$ and $g$ be suitable functions. Then
\begin{itemize}
\item[(i)] $|f\star_k g|\leq |f|\star_k |g|$.
 \item[(ii)]$\mathcal{E}_n^k\star_k \mathcal{E}_m^k=\|\mathcal{E}_n\|^2_{2,k}\delta_{m,n}\mathcal{E}_n^k$.
\item[(iii)]
 $\|f\star_k g\|_{\infty,k}=\|f\star_k g\|_{\infty,k}\leq 16\|f\|_{1,k}\|g\|_{\infty,k}$.
\item[(iv)]  $\|f\star_k g\|_{1,k}\leq 16
\|f\|_{1,k}\|g\|_{1,k}$.
\item[(v)] Young's inequality: For $1\leq p,q,r\leq \infty$ with $\dfrac{1}{r}=\dfrac{1}{p}+\dfrac{1}{q}-1$, 
$$\|f\star_k g\|_{r,k}\leq 16 \|f\|_{p,k}\| g\|_{q,k}.$$
\end{itemize}
\end{prop}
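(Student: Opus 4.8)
The plan is to deduce every item from three facts established for the translation $\tau_x$ in the preceding proposition: the eigenfunction identity $\tau_x(\mathcal{E}_n^k(i\cdot))(y)=\mathcal{E}_n^k(ix)\mathcal{E}_n^k(iy)$, the symmetry $\tau_x f(-y)=\tau_{-y}f(x)$, and the contraction $\|\tau_x f\|_{p,k}\le 16\|f\|_{p,k}$ for $1\le p\le\infty$. Before starting I would record the reality relation $\mathcal{E}_n^k(-iy)=\overline{\mathcal{E}_n^k(iy)}$: since $E_n^k(z)=e^{nz}+\sum_{j\triangleleft n}c_{n,j}e^{jz}$ has real coefficients and $E_n^k(0)$ is real, complex conjugation sends $iy$ to $-iy$. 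Consequently $\int_{-\pi}^{\pi}\mathcal{E}_n^k(-iy)\mathcal{E}_m^k(iy)\,dm_k(y)=(\mathcal{E}_m^k,\mathcal{E}_n^k)_k=\|\mathcal{E}_n^k\|_{2,k}^2\,\delta_{m,n}$ by orthogonality of the basis.

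For (i) I would apply the triangle inequality inside the defining integral and use the nonnegativity of $W_k$ together with $|\mathcal{W}_k|\le 16W_k$ to majorize $|\tau_x f(-y)|$ by the corresponding positive translation of $|f|$, obtaining $|f\star_k g|\le|f|\star_k|g|$. For (ii) I would substitute $f=\mathcal{E}_n^k$ and $g=\mathcal{E}_m^k$, factor out $\mathcal{E}_n^k(ix)$ by the eigenfunction identity, and evaluate the remaining integral $\int_{-\pi}^{\pi}\mathcal{E}_n^k(-iy)\mathcal{E}_m^k(iy)\,dm_k(y)$ via the reality relation and orthogonality recorded above; this yields exactly $\|\mathcal{E}_n^k\|_{2,k}^2\,\delta_{m,n}\,\mathcal{E}_n^k(ix)$.

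For (iii) I would estimate $|f\star_k g(x)|\le\|g\|_{\infty,k}\int_{-\pi}^{\pi}|\tau_x f(-y)|\,dm_k(y)$ and then use that $dm_k$ is invariant under $y\mapsto -y$, together with the $L^1$ contraction $\|\tau_x f\|_{1,k}\le 16\|f\|_{1,k}$. For (iv) I would integrate $|f\star_k g|$ in $x$, apply Fubini, and rewrite $\int_{-\pi}^{\pi}|\tau_x f(-y)|\,dm_k(x)=\|\tau_{-y}f\|_{1,k}$ through the symmetry $\tau_x f(-y)=\tau_{-y}f(x)$, landing again on the same $L^1$ bound. Both computations are routine once the translation estimates are available.

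The genuinely substantive step is (v). I would set it up as a bilinear interpolation: the map $(f,g)\mapsto f\star_k g$ is bounded $L^1\times L^1\to L^1$ by (iv), $L^1\times L^\infty\to L^\infty$ by (iii), and $L^\infty\times L^1\to L^\infty$ by the companion estimate $|f\star_k g(x)|\le\|\tau_x f\|_{\infty,k}\|g\|_{1,k}\le 16\|f\|_{\infty,k}\|g\|_{1,k}$, each with constant $16$. The three exponent pairs $(1/p,1/q)\in\{(1,1),(1,0),(0,1)\}$ are precisely the vertices of the Young triangle $\{1/p+1/q\ge 1\}$, so the multilinear Riesz--Thorin theorem, applied by interpolating first along one edge and then against the opposite vertex, fills in every admissible triple $(p,q,r)$ while keeping the constant equal to $16$ (the operator norm being log-convex and all three endpoint constants equal to $16$). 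I expect the main difficulty to lie solely in organizing this interpolation and in confirming the symmetric endpoint and the stability of the constant, rather than in any single hard estimate; an alternative, fully elementary route is a three-exponent H\"older argument applied to the positive kernel $16W_k$, which is normalized to $1$ in each variable by the normalization property of $W_k$.
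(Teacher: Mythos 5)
Your proposal is correct, and it is essentially the paper's own approach: the paper gives no proof at all, asserting only that these properties ``follow from standard classical arguments'' based on the translation-operator bounds of the preceding proposition, and your deductions from $\|\tau_x f\|_{p,k}\le 16\|f\|_{p,k}$, the symmetry $\tau_x f(y)=\tau_y f(x)$, the eigenfunction identity, orthogonality, and bilinear interpolation (or the positive-kernel H\"older argument) are precisely those standard arguments with the correct constant $16$. The only caveat concerns (i): since the translation kernel $\mathcal{W}_k$ is complex, the quantity $|f|\star_k|g|$ need not be nonnegative, so what your majorization actually yields is the bound by $16$ times the convolution of $|f|$ and $|g|$ taken with the positive kernel $W_k$ rather than the literal inequality $|f\star_k g|\le |f|\star_k|g|$; this imprecision sits in the paper's statement itself, not in your reasoning.
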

The convolution product considered here satisfies the structural assumptions and properties of  Chapter 3 in  \cite{MS}. Consequently, according to Theorem 13 of \cite{MS}, the following result is valid in this context
 \begin{equation}
 \|f\star_k g\|_r\leq C(p,q)\|f\|_p\|g\|_{q,k}^*,\quad 1<p,q,r< \infty,\;\dfrac{1}{r}=\dfrac{1}{p}+\dfrac{1}{q}-1
 \end{equation}
where 
$$\|g\|_{q,k}^*=\sup_{E}\frac{\int_{-\pi}^{\pi}|g|\chi_{E}dm_k}{\|\chi_{E}\|_{q',k}},\qquad \frac{1}{q}+\frac{1}{q'}=1.$$
The supremum is taken over all measurable sets 
$E\subset[-\pi,\pi]$, $m_k(E)<\infty$, and  $\chi_{E}$ is the characteristic function of the set $E$.
\par We next introduce the fractional integral. It can be defined as follows. If
$$f= \sum_{n\in \mathbb{Z}}a_n \mathcal{E}_n^k$$
  then the fractional integral $ I_k^\alpha(f)$, $\alpha >0$, is given by
$$ I_k^\alpha(f)= \sum_{n\in \mathbb{Z},n\neq 0}|n|^{-\alpha}a_n \mathcal{E}_n^k.$$
\begin{thm}
 For $\alpha>0$ and  $1<p<\frac{2k+1}{\alpha}$, there exists a constant $C=C(\alpha,p)$ such that
$$\|I_k^\alpha(f\|_{q,k}\leq \|f\|_{p,k},\quad\text{where} \quad\frac{1}{q}=\frac{1}{p}-\frac{\alpha}{2k+1}$$
\end{thm}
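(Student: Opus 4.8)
The plan is to realize $I_k^\alpha$ as a convolution against a fixed kernel and then invoke the starred-norm Young inequality displayed above. For $f=\sum_{n}a_n\mathcal E_n^k$, the orthogonality relation $\mathcal E_n^k\star_k\mathcal E_m^k=\|\mathcal E_n^k\|_{2,k}^2\,\delta_{m,n}\,\mathcal E_n^k$ of the preceding proposition forces
\[
I_k^\alpha f= f\star_k G_\alpha,\qquad G_\alpha=\sum_{n\neq 0}\frac{|n|^{-\alpha}}{\|\mathcal E_n^k\|_{2,k}^2}\,\mathcal E_n^k .
\]
Setting $s=\frac{2k+1}{2k+1-\alpha}$, one has $\frac1q=\frac1p-\frac{\alpha}{2k+1}=\frac1p+\frac1s-1$, so applying the Young-type inequality $\|f\star_k g\|_{r}\le C(p,s)\|f\|_{p}\|g\|^{*}_{s,k}$ with $g=G_\alpha$ and $r=q$ reduces the theorem to the single estimate $\|G_\alpha\|^{*}_{s,k}<\infty$. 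Note $1<p<\frac{2k+1}{\alpha}$ forces $0<\alpha<2k+1$, hence $s>1$ and $0<\frac1q<1$, so all exponents are admissible.

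The heart of the matter is a pointwise bound on $G_\alpha$. I would use subordination: from $|n|^{-\alpha}=\frac{1}{\Gamma(\alpha)}\int_0^\infty t^{\alpha-1}e^{-|n|t}\,dt$ and the substitution $r=e^{-t}$, the factor $e^{-|n|t}=r^{|n|}$ identifies the Poisson semigroup, and one obtains
\[
G_\alpha(z)=\frac{1}{\Gamma(\alpha)}\int_0^1(-\log r)^{\alpha-1}\,r^{-k}\Big(P_k(r,0,z)-c_0\,r^{k}\Big)\,\frac{dr}{r},
\]
where $P_k(r,0,\cdot)$ is the convolution kernel of the Poisson operator (up to reflection, immaterial for sizes) and the constant $c_0$ subtracts the $n=0$ mode. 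Specializing the explicit formula of the Theorem to $x=0$ collapses the $u$-integral to a Beta constant, giving $P_k(r,0,z)=c_k(1-r^2)\,r^{k}\,(1-2r\cos z+r^2)^{-(k+1)}$, whose only singularity is at $z=0$ (at $z=\pm\pi$ the denominator equals $(1+r)^2$ and $G_\alpha$ is bounded).

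Near $z=0$, writing $t=-\log r\asymp 1-r$, one has $1-2r\cos z+r^2\asymp t^2+z^2$ and $1-r^2\asymp 2t$, so the integrand is comparable to $t^{\alpha}\,(t^2+z^2)^{-(k+1)}$; the scaling $t=|z|\tau$ yields
\[
|G_\alpha(z)|\lesssim |z|^{\alpha-(2k+1)}\int_0^\infty\frac{\tau^{\alpha}}{(1+\tau^{2})^{k+1}}\,d\tau,
\]
the last integral converging precisely because $\alpha<2k+1$. Since $dm_k$ is comparable to $|z|^{2k}\,dz$ near $0$, the bound $|G_\alpha(z)|\lesssim|z|^{\alpha-(2k+1)}$ gives $m_k(\{|G_\alpha|>\lambda\})\lesssim\lambda^{-s}$, i.e. $G_\alpha\in L^{s,\infty}(dm_k)$; by the Hardy–Littlewood rearrangement estimate $\int_E|G_\alpha|\,dm_k\le\int_0^{m_k(E)}G_\alpha^{*}(t)\,dt$ and $G_\alpha^{*}(t)\lesssim t^{-1/s}$ (valid as $s>1$) one concludes $\|G_\alpha\|^{*}_{s,k}\lesssim\|G_\alpha\|_{L^{s,\infty}(dm_k)}<\infty$, which finishes the proof.

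The main obstacle is exactly this uniform kernel estimate: justifying the subordination representation and the interchange of sum and integral, extracting the sharp homogeneity $|z|^{\alpha-(2k+1)}$ while controlling the range of $r$ bounded away from $1$ and the effect of the constant-mode subtraction, and checking that the estimate holds uniformly across $[-\pi,\pi]$ (including the neighborhood of $z=\pm\pi$). Once the weak-type membership $G_\alpha\in L^{s,\infty}(dm_k)$ is secured, the passage to the $L^p\to L^q$ bound is purely formal through the convolution machinery already set up.
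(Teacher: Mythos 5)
Your proposal follows essentially the same route as the paper: the same convolution representation $I_k^\alpha f=f\star_k \mathcal{I}_k^\alpha$ (your $G_\alpha$ is the paper's $\mathcal{I}_k^\alpha$), the same reduction via the starred-norm Young inequality to the single estimate $\|\mathcal{I}_k^\alpha\|_{s,k}^*<\infty$ with $s=\frac{2k+1}{2k+1-\alpha}$, and the same subordination formula obtained by writing $|n|^{-\alpha}$ as an integral of $r^{|n|}$ and evaluating the Poisson kernel at the origin, where the $u$-integral collapses to a Beta constant. The only point of divergence is the endgame: the paper splits $E\subset[-\pi,\pi]$ by parity to reduce to $[0,\pi]$ and then invokes the proof of Theorem 12 of Muckenhoupt--Stein for the finiteness of the starred norm, whereas you prove this directly, extracting the pointwise bound $|\mathcal{I}_k^\alpha(z)|\lesssim|z|^{\alpha-(2k+1)}$ from the subordination integral (with the correct convergence condition $\alpha<2k+1$, which your hypothesis $1<p<\frac{2k+1}{\alpha}$ guarantees), deducing membership in weak $L^s(dm_k)$, and passing to the starred norm by the Hardy--Littlewood rearrangement inequality. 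That last equivalence is valid since $s>1$, and your exponent bookkeeping checks out, so your argument is correct and has the advantage of being self-contained exactly where the paper defers to the literature; the interchange of sum and integral that you flag as an obstacle is routine (absolute convergence for $r<1$) and is glossed over by the paper as well.
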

\begin{proof}
 The fractional integral $ I_k^\alpha(f)$  can be expressed in the form of a convolution
$$I_k^\alpha(f)=f\star_k \mathcal{I}_k^\alpha$$
where 
$$\mathcal{I}_k^\alpha=\sum_{n\in\mathcal{Z},n\neq  0}|n|^{-\alpha}\rho_n \mathcal{E}_n^k, \quad \rho_n 
=(\| \mathcal{E}_n^k,\|_{2,k})^{-2}$$
It therefore suffices to show that
  $$\|\mathcal{I}_k^\alpha\|_{q,k}^*<\infty, \quad \text{for}\quad1-\frac{1}{q}=\frac{\alpha}{2k+1}.$$
For each $n\neq 0$ we invoke the well-known integral representation 
$$|n|^{-\alpha }=\frac{1}{\Gamma(\alpha)}\int_0^1r^{|n|}\;\ln(\dfrac{1}{r})^{\alpha-1}\frac{dr}{r} $$
to write 
$$\mathcal{I}_k^\alpha(x)=\frac{1}{\Gamma(\alpha)}\int_0^1\sum_{n\neq  0}n^{-\alpha}r^{|n|}\rho_n \mathcal{E}_n^k(ix)\ln(\dfrac{1}{r})^{\alpha-1}\frac{dr}{r} .$$
However, evaluating the Poisson kernel at $y=0$ yields
$$\sum_{n\neq  0}n^{-\alpha}r^{|n|}\rho_n \mathcal{E}_n^k(ix)=\sum_{n\neq 0}\gamma_n^k r^{|n|}E_n^k(ix)E_n^k(0)=\dfrac{\Gamma(k+1)}{2\sqrt{\pi}\Gamma(k+\frac{1}{2})}\left(\dfrac{1-r^2}{(1-2r\cos x+r^2)^{k+1} }-1\right).$$
Thus we obtain the integral representation
\begin{equation}\label{00}
\mathcal{I}_k^\alpha(x)=\dfrac{\Gamma(k+1)}{2\sqrt{\pi}\Gamma(k+\frac{1}{2})\Gamma(\alpha)} \int_0^1 \left[\dfrac{1-r^2}{1-2r\cos x+r^2 }-1\right]\ln(\dfrac{1}{r})^{\alpha-1}\frac{dr}{r} .
\end{equation}
Let $E\subset[-\pi,\pi]$,
$$E=E\cap[0,\pi]\cup E\cap [-\pi,0)=E_1\cup E_2.$$
Because $\mathcal{I}_k^\alpha$ and the measure $dm_k(x)$ are even, we have  
\begin{eqnarray*}
\int_{-\pi}^{\pi}|\mathcal{I}_k^\alpha(x)|\chi_{E}(x)dm_k(x)&=&\int_{0}^{\pi}|\mathcal{I}_k^\alpha(x)|\chi_{E_1}(x)dm_k(x)+\int_{-\pi}^{0}|\mathcal{I}_k^\alpha(x)|\chi_{E_2}(x)dm_k(x)
\\&=&\int_{0}^{\pi}|\mathcal{I}_k^\alpha(x)|\chi_{E_1}(x)dm_k(x)+\int_{0}^{\pi}|\mathcal{I}_k^\alpha(x)|\chi_{(-E_2)}(x)dm_k(x)
\end{eqnarray*}
and for all $q>1$
\begin{eqnarray}\label{000}\nonumber
\frac{\int_{\pi}^{\pi}|\mathcal{I}_k^\alpha|\chi_{E}dm_k}{\|\chi_{E}\|_{q',k}}&\leq & \frac{\int_{0}^{\pi}|\mathcal{I}_k^\alpha|\chi_{E}dm_k}{\|\chi_{E_1}\|_{q',k}}+\frac{\int_{0}^{\pi}|\mathcal{I}_k^\alpha|\chi_{(-E_2)}dm_k}{\|\chi_{(-E_2)}\|_{q',k}}
\\&\leq & 2\sup_{E\subset[0,\pi]}\frac{\int_{0}^{\pi}|\mathcal{I}_k^\alpha|\chi_{E}dm_k}{\|\chi_{E}\|_{q',k}} 
\end{eqnarray}
 In view of  (\ref{00}) and the proof Theorem 12 of  \cite{MS},
$$\sup_{E\subset[0,\pi]}\frac{\int_{0}^{\pi}|\mathcal{I}_k^\alpha|\chi_{E}dm_k}{\|\chi_{E}\|_{q',k}} <\infty$$ which implies
$$\|\mathcal{I}_k^\alpha\|_{q,k}^*<\infty.$$
 The proof is thus complete.
 \end{proof}
\section{A generalized Hilbert transform}
We introduce the first-order differential-difference operator
$$ \mathcal{T}_k(f)(x)= f'(x)+2ki\frac{f(x)-f(-x)}{1-e^{-2ix}}-kif(x)$$
 and define the corresponding Laplacian-type operator by
$$\Delta_k=\mathcal{T}_k^2.$$
When restricted to even functions
$\Delta_k$ reduces to the real second-order differential operator  
$$\Delta_k f(x)= f''(x)+2k\cot x f'(x)-k^2 f(x).$$
Moreover, the functions $E_n^k$ are eigenfunctions of $\mathcal{T}_k$, in the sense that
$$\mathcal{T}_k(E_n^k(ix))=in_k E_n^k(ix).$$
A direct computation shows that the adjoint of $\mathcal{T}_k$
  with respect to the measure $dm_k(x)$ satisfies
$$\mathcal{T}_k^*=-\mathcal{T}_k$$
Consequently, $-\Delta_k$ is  positive self-adjoint operator on   $L^2([-\pi,\pi], dm_k(x))$.
\par In analogy with the classical setting, we define the Hilbert transform associated with  $\Delta_k$ by
 $$H_k=\mathcal{T}_k(-\Delta_k)^{-\frac{1}{2}}.$$
For $f\in L^2([-pi,\pi], dm_k(x))$,
$$f(x)=\sum_{n\in \mathbb{Z}}a_n E_n^k(ix),$$
  we then have 
$$H_k(f)(x)=i\sum_{n\in \mathbb{Z}} \frac{n_k}{|n_k|} a_n E_n^k(ix)
=i\sum_{n=1}^\infty a_n E_n^k(ix)-i\sum_{n=-\infty}^0  a_n E_n^k(ix).$$
It follows immediately that the Hilbert  transform $H_k$  
is skew-adjoint, $ H_k^*=-H_k$ and is a bounded operator  on $L^2([-\pi,\pi], dm_k(x))$, that is 
$$\|H_k(f)\|_{2}\leq  \|f\|_2.$$
\begin{prop}\label{p22}
 The Hilbert transform $H_k$ is a singular integral operator. More precisely, for any 
  $f\in L^2([-\pi,\pi], dm_k(x))$, and  any $x$ such that $\pm x\notin supp(f)$, one has
  $$H_k(f)(x)=\int_{-\pi}^{\pi}\mathcal{H}_k(x,y)f(y)dm_k(y)$$
where  the kernel $\mathcal{H}_k$ is explicitly given, for $x\neq \pm y$
  by
  \begin{equation}\label{hk}
  \mathcal{H}_k(x,y)=i\frac{k2^{-2k}}{2\pi}(1-e^{i(x-y)} )\int_{-1}^1\dfrac{(1+u)(1-u^2 )^{k-1} du}{(1-(\cos x\cos y+u\sin x\sin y) )^{k+1}}.
  \end{equation}
In addition, $\mathcal{H}_k$ satisfies the standard singular estimate
\begin{equation}\label{hkk}
|\mathcal{H}_k(x,y)|\leq  \frac{C_k}{||x|-|y||^{2k+1}}
\end{equation}
where $C_k>0$ is a constant depending only on the multiplicity parameter $k$.
\end{prop}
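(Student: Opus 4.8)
The plan is to obtain $\mathcal{H}_k$ as the boundary value of the Abel (Poisson) regularization of $H_k$, to compute the associated conjugate Poisson kernel in closed form by the same reduction that produced $P_k$, and then to read off the size bound from the resulting integral.

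\textbf{Step 1 (regularization).} Expanding $f=\sum_n a_n E_n^k(i\cdot)$ one has $a_n=\gamma_n^k\int_{-\pi}^{\pi}f(y)E_n^k(-iy)\,dm_k(y)$, since $\overline{E_n^k(iy)}=E_n^k(-iy)$ (the $c_{n,j}$ being real). For $0\le r<1$ I set
$$Q_k(r,x,y)=i\sum_{n\in\mathbb Z}\frac{n_k}{|n_k|}\,\gamma_n^k\,r^{|n|+k}E_n^k(ix)E_n^k(-iy),$$
so that the Abel mean $H_k^r f(x)=i\sum_n \tfrac{n_k}{|n_k|}a_n r^{|n|+k}E_n^k(ix)=\int_{-\pi}^{\pi}Q_k(r,x,y)f(y)\,dm_k(y)$ converges to $H_kf$ in $L^2$. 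For $\pm x\notin\supp(f)$ one stays away from $x=\pm y$ on $\supp(f)$, so $Q_k(r,x,\cdot)$ is bounded uniformly as $r\to1^-$; dominated convergence then lets me pass the limit through the integral, and it remains to evaluate $\mathcal{H}_k(x,y)=\lim_{r\to1^-}Q_k(r,x,y)$ for $x\ne\pm y$.

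\textbf{Step 2 (closed form).} Writing $Q_k=i\,(2S_+-P_k)$ with $S_+=\sum_{n\ge1}\gamma_n^k r^{n+k}E_n^k(ix)E_n^k(-iy)$ isolates the one-sided sum; the subtraction of $P_k$ cancels exactly the approximate-identity part and leaves coefficients of order $(n+k)^{-1}$. Using (\ref{jacobi}) in the form $E_n^k(ix)=P_n^k(ix)+2i\sin x\,P_{n-1}^{k+1}(ix)$ and passing to ultraspherical polynomials by (\ref{pc}), the product $E_n^k(ix)E_n^k(-iy)$ splits into a symmetric part (the $C_n^kC_n^k$ and $C_{n-1}^{k+1}C_{n-1}^{k+1}$ sums already met in the derivation of $P_k$) and an antisymmetric cross part, proportional to $\sin x\,C_{n-1}^{k+1}(\cos x)C_n^k(\cos y)-\sin y\,C_n^k(\cos x)C_{n-1}^{k+1}(\cos y)$. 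The cross terms cancelled in $P_k$ by symmetry but now survive; via the identity $\tfrac{d}{dx}C_n^k(\cos x)=-2k\sin x\,C_{n-1}^{k+1}(\cos x)$ they become $(\partial_x-\partial_y)$ applied to the diagonal Gegenbauer sum, while the $(n+k)^{-1}$ weight corresponds to an integration in $r$. Summing each piece with the addition formula used for $P_k$ (eq.\ (2.12) of \cite{MS}) and performing the same integration by parts that produced the weight $(1+u)(1-u^2)^{k-1}$ there, all symmetric contributions carry the vanishing factor $(1-r^2)$ and disappear as $r\to1^-$ (the integral staying bounded for $x\ne\pm y$), whereas the surviving cross contribution assembles the factor $1-e^{i(x-y)}$ and yields exactly (\ref{hk}) after matching constants.

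\textbf{Step 3 (size estimate).} The key geometric input is Lemma \ref{03}: as $u$ ranges over $[-1,1]$, the quantity $\cos x\cos y+u\sin x\sin y$ ranges over $[\cos(|x|+|y|),\cos(|x|-|y|)]$, so the denominator in (\ref{hk}) satisfies
$$1-(\cos x\cos y+u\sin x\sin y)\ \ge\ 1-\cos(|x|-|y|)=2\sin^2\tfrac{|x|-|y|}{2}=:\delta'\ \asymp\ \bigl|\,|x|-|y|\,\bigr|^2 .$$
Together with $\int_{-1}^{1}(1-u^2)^{k-1}\,du<\infty$ and $|1-e^{i(x-y)}|^2=2\,(1-\cos(x-y))$, this immediately gives (\ref{hkk}) in the region $\sin x\sin y>0$, where $|1-e^{i(x-y)}|\asymp\bigl|\,|x|-|y|\,\bigr|$ and the crude bound $\le C_k\,\delta'^{-(k+1)}$ for the integral suffices. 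The delicate case is $x\approx -y$: there $|1-e^{i(x-y)}|$ is \emph{not} comparable to $\bigl|\,|x|-|y|\,\bigr|$, the minimum of the denominator is attained near $u=-1$, and the vanishing factor $(1+u)$ must be exploited. Substituting $p=1+u$ reduces the integral to $\int_0^2 (2-p)^{k-1}p^k\bigl(\delta'+|\sin x\sin y|\,p\bigr)^{-(k+1)}\,dp$; estimating this against $|1-e^{i(x-y)}|\asymp\bigl(\delta'+|\sin x\sin y|\bigr)^{1/2}$ restores the bound $C_k\bigl|\,|x|-|y|\,\bigr|^{-(2k+1)}$.

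\textbf{Main obstacle.} The crux is Step 2: keeping track of the antisymmetric cross terms and verifying that, after the differentiation/addition-formula and the integration by parts, they collapse precisely to the single factor $1-e^{i(x-y)}$ rather than a more complicated expression, so that the $r\to1^-$ limit reproduces (\ref{hk}). The case analysis near the anti-diagonal $x=-y$ in Step 3 is the secondary difficulty.
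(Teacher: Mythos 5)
Your Steps 1 and 3 are workable in outline, but the proposal has a genuine gap exactly at its core, Step 2, which you yourself leave unresolved as the ``main obstacle'': formula (\ref{hk}) is never actually derived, and the mechanism you sketch for deriving it is internally inconsistent. You claim that after the Gegenbauer decomposition ``all symmetric contributions carry the vanishing factor $(1-r^2)$ and disappear as $r\to1^-$,'' leaving only antisymmetric cross terms that ``assemble the factor $1-e^{i(x-y)}$.'' But $i\bigl(1-e^{i(x-y)}\bigr)=\sin(x-y)+i\bigl(1-\cos(x-y)\bigr)$: the target kernel (\ref{hk}) has a nonvanishing \emph{imaginary} part proportional to the even, symmetric factor $1-\cos(x-y)$, and that part can only come from the diagonal (symmetric) sums. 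If all symmetric contributions vanished, the limit kernel would be real and odd, proportional to $\sin(x-y)$ alone, and would not match (\ref{hk}). Concretely, writing $Q_k=i(S_+-S_-)$ with $S_\pm$ the positive/negative halves of the series, the diagonal coefficients of $S_+-S_-$ are (using $\gamma_{-n}^k=\tfrac{(n+k)^2}{n(n+2k)}\gamma_n^k$ from (\ref{69})) equal to $-\tfrac{2k}{n}\gamma_n^k$ and $\tfrac{2k}{n+2k}\gamma_n^k$: these are exactly the $O(n^{-1})$ coefficients you describe as ``an integration in $r$,'' and integrating a Poisson-type kernel (which carries the factor $1-r^2$) in $r$ does \emph{not} produce something that vanishes as $r\to1^-$ — compare the nonzero fractional-integral kernel (\ref{00}) in the paper, which is built in precisely this way. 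So the surviving symmetric part is what your sketch drops, and carrying out Step 2 as described would produce the wrong kernel.

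The idea your proposal is missing is the paper's short algebraic argument, which avoids Gegenbauer cross terms entirely. Set $A=\sum_{n\geq1}\gamma_n^kr^{n+k}E_n^k(ix)E_n^k(-iy)$. The symmetry relation (\ref{12}), $E_{n+1}^k(x)=e^xE_{-n}^k(-x)$, together with $\gamma_{-n}^k=\gamma_{n+1}^k$ (from (\ref{Ga})) and the realness of the coefficients $c_{n,j}$, gives $\sum_{n\leq0}\gamma_n^kr^{|n|+k}E_n^k(ix)E_n^k(-iy)=\tfrac{e^{i(x-y)}}{r}\bar{A}$. Hence $P_k=A+\tfrac{e^{i(x-y)}}{r}\bar{A}$, and since $P_k$ is real, also $P_k=\bar{A}+\tfrac{e^{-i(x-y)}}{r}A$; solving this $2\times2$ linear system expresses $A$, and therefore the regularized conjugate kernel $i\bigl(A-\tfrac{e^{i(x-y)}}{r}\bar{A}\bigr)=i\,\tfrac{1+r^2-2re^{i(x-y)}}{1-r^2}\,P_k(r,x,y)$, as an explicit multiple of the Poisson kernel. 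The factor $1-r^2$ cancels against the one in the closed form of $P_k$ (Theorem 3.1), the denominators are bounded below by $4r\sin^2\tfrac{||x|-|y||}{2}$ via the lemma $\min\bigl(\sin\tfrac{|x-y|}{2},\sin\tfrac{|x+y|}{2}\bigr)=\sin\tfrac{||x|-|y||}{2}$, and dominated convergence then yields both the integral representation and (\ref{hk}) in one stroke; this also repairs the circularity in your Step 1, where the uniform-in-$r$ bound on the regularized kernel is asserted before any closed form is available. Your Step 3 is a correct, if more laborious, alternative to the paper's estimate, which uses the single inequality $\tfrac{1+u}{1-\cos x\cos y-u\sin x\sin y}\leq\tfrac{1}{\sin^2\frac{x-y}{2}}$ (both sides affine in $u$, with equality at $u=1$ and trivial at $u=-1$); but without a correct and complete Step 2 the proposition is not proved.
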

To proceed, we begin with a simple geometric identity which will be used in the kernel estimates. 
\begin{lem}
For all $x,y\in[-\pi,\pi]$ we have
$$
\min\!\left(\sin\frac{|x-y|}{2},\;\sin\frac{|x+y|}{2}\right)
= \sin\frac{\,\big||x|-|y|\big|}{2}.$$
\end{lem}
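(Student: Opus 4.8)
The plan is to prove the trigonometric identity by reducing it to a case analysis governed by the signs and relative magnitudes of $x$ and $y$. Since both sides are unchanged under the substitutions $x\mapsto -x$ and $y\mapsto -y$ (each flips the sign inside an absolute value, leaving $|x\pm y|$ permuted between the two arguments of the $\min$, and leaving $\big||x|-|y|\big|$ fixed), I would first reduce without loss of generality to the case $x,y\in[0,\pi]$. In that regime one has $|x|=x$, $|y|=y$, so the right-hand side becomes $\sin\frac{|x-y|}{2}$, and the claim collapses to the assertion
\begin{equation*}
\min\!\left(\sin\tfrac{|x-y|}{2},\ \sin\tfrac{x+y}{2}\right)=\sin\tfrac{|x-y|}{2},
\end{equation*}
i.e. $\sin\frac{|x-y|}{2}\le \sin\frac{x+y}{2}$.

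The key step is then monotonicity of $t\mapsto \sin\frac t2$ on the relevant interval. For $x,y\in[0,\pi]$ the arguments satisfy $0\le |x-y|\le x+y\le 2\pi$, but this alone is not enough because $\sin\frac t2$ is only increasing on $[0,\pi]$ and decreasing on $[\pi,2\pi]$. So I would split according to whether $x+y\le\pi$ or $x+y>\pi$. In the first subcase, $0\le|x-y|\le x+y\le\pi$ places both half-arguments in the increasing range, and the inequality $\sin\frac{|x-y|}2\le\sin\frac{x+y}2$ is immediate. In the second subcase $x+y>\pi$, I would instead compare $\sin\frac{x+y}2=\sin\bigl(\pi-\frac{x+y}2\bigr)=\sin\frac{2\pi-(x+y)}2$ and note that $|x-y|\le 2\pi-(x+y)$ (equivalently $\max(x,y)\le\pi$, which holds), with both $\frac{|x-y|}2$ and $\frac{2\pi-(x+y)}2$ now lying in $[0,\frac\pi2]\subset[0,\pi]$; monotonicity again yields the inequality.

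I would then record that the general statement follows from the reduced one by the evenness observed at the outset: replacing $x,y$ by $|x|,|y|\in[0,\pi]$ leaves every term of the identity invariant, since $\sin\frac{|x-y|}2$ and $\sin\frac{|x+y|}2$ are exactly the two quantities $\sin\frac{\bigl||x|-|y|\bigr|}2$ and $\sin\frac{|x|+|y|}2$ in some order, and the $\min$ is symmetric. The main obstacle is purely bookkeeping: one must be careful that the absolute values and the folding $\sin\theta=\sin(\pi-\theta)$ are handled consistently across the sign cases, so that the correct one of $|x-y|$, $|x+y|$ is identified as the smaller argument after reflecting into $[0,\pi]$. There is no analytic difficulty beyond the elementary monotonicity of $\sin\frac t2$, so the proof is short once the case reduction is set up correctly.
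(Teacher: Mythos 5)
Your proof is correct, but it proves the key inequality by a different mechanism than the paper. Both arguments share the same skeleton: first quotient out the sign ambiguity (you phrase this as invariance under $x\mapsto -x$, $y\mapsto -y$, which swaps the two arguments of the $\min$; the paper sets $a=|x|$, $b=|y|$ with $a\ge b$ and checks the two sign configurations explicitly), after which everything reduces to the single inequality $\sin\frac{|x-y|}{2}\le\sin\frac{x+y}{2}$ for $x,y\in[0,\pi]$, i.e.\ $\sin\frac{a-b}{2}\le\sin\frac{a+b}{2}$ in the paper's notation. The difference is how that inequality is established. The paper dispatches it in one line via the sum-to-product identity $\sin C-\sin D = 2\cos\frac{C+D}{2}\sin\frac{C-D}{2} = 2\cos\frac{a}{2}\sin\frac{b}{2}\ge 0$, with no further case analysis. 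You instead use only the monotonicity of $\sin$ on $[0,\tfrac{\pi}{2}]$, which forces a split into the subcases $x+y\le\pi$ and $x+y>\pi$, the latter handled by the reflection $\sin\theta=\sin(\pi-\theta)$ together with the observation that $|x-y|\le 2\pi-(x+y)$ (equivalent to $\max(x,y)\le\pi$). Your route is marginally more elementary, needing no trigonometric identity beyond reflection, at the cost of an extra case; the paper's identity makes the nonnegativity of the difference manifest at a glance and avoids exactly the folding bookkeeping you yourself flagged as the main pitfall. Both proofs are complete and correct.
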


\begin{proof}
Let $a=|x|$ and $b=|y|$. Without loss of generality we may assume $a\ge b$.
There are two possible sign configurations:
\begin{itemize}
    \item If $x$ and $y$ have the same sign, then $|x-y|=a-b$ and $|x+y|=a+b$.
    \item If $x$ and $y$ have opposite signs, then $|x+y|=a-b$ and $|x-y|=a+b$.
\end{itemize}

Thus, in both cases,
$$
\min\!\left(\sin\frac{|x-y|}{2},\;\sin\frac{|x+y|}{2}\right)
= \min\!\left(\sin\frac{a-b}{2},\;\sin\frac{a+b}{2}\right).
$$
We now show that
$$
\sin\frac{a+b}{2}\ge\sin\frac{a-b}{2}.
$$
Let
$$
C=\frac{a+b}{2},\qquad D=\frac{a-b}{2}.
$$
Then by the sine difference identity,
$$
\sin C - \sin D
= 2\cos\frac{C+D}{2}\,\sin\frac{C-D}{2}
= 2\cos\frac{a}{2}\,\sin\frac{b}{2}.
$$
Since $a,b\in[0,\pi]$, both factors on the right-hand side are nonnegative, so
$\sin C\ge\sin D$. Hence
$$
\min\!\left(\sin\frac{a-b}{2},\;\sin\frac{a+b}{2}\right)
= \sin\frac{a-b}{2}.
$$
Finally, since $a-b = \big||x|-|y|\big|$, we conclude
$$
\min\!\left(\sin\frac{|x-y|}{2},\;\sin\frac{|x+y|}{2}\right)
= \sin\frac{\,\big||x|-|y|\big|}{2},
$$
which completes the proof.
\end{proof}
\begin{proof}[ Proof of Proposition \ref{p22}.]
Let us write 
\begin{eqnarray*}
&&H_k(f)(x)\\&=&i\lim _{r\rightarrow 1^-}\sum_{n\in \mathbb{Z}}\frac{n_k}{|n_k|}r^{|n|+k} a_n E_n^k(ix)\\&=& i\lim _{r\rightarrow 1^-}\int_{-\pi}^\pi f(y)
\left(\sum_{n\in \mathbb{Z}}\frac{n_k}{|n_k|}r^{|n|+k} \gamma_n^{k} E_n^k(ix) E_n^k(-iy)\right)dm_k(y)
\\&=& i\lim _{r\rightarrow 1^-}\int_{-\pi}^\pi f(y)
\left(\sum_{n=1}^\infty \gamma_n^{k}r^{|n|+k} E_n^k(ix) E_n^k(-iy)-
\sum_{n=-\infty}^0 \gamma_n^{k} r^{|n|+k}E_n^k(ix) E_n^k(-iy)\right)dm_k(y).
\end{eqnarray*}
Let  
$$A=\sum_{n=1}^{\infty}r^{n+k} \gamma_n^{k} E_n^k(ix) E_n^k(-iy).$$
In view of (\ref{12}) we   obtain 
$$ \sum_{n=0}^{\infty}r^{n+k} \gamma_{-n}^{k} E_{-n}^k(ix) E_{-n}^k(-iy)
=\frac{e^{i(x-y)}}{r}\bar{A}.$$
As the Poisson kernel,
$$P_k(r,x,y)=A+\frac{e^{i(x-y)}}{r}\bar{A}=\bar{A}+\frac{e^{-i(x-y)}}{r}A,$$
$A$ and $\bar{A}$ can be written explicitly  as
$$A=\frac{(r^2-re^{i(x-y)})P_k(r,x,y)}{r^2-1}\quad\text{and} \quad \bar{A}=\frac{(r^2-re^{-i(x-y)})P_k(r,x,y)}{r^2-1}. $$
From which  it follows that 
\begin{eqnarray*}
 && i\sum_{n\in \mathbb{Z}} \frac{n_k}{|n_k|}r^{|n|+k} \gamma_n^{k} 
 E_n^k(ix) E_n^k(-iy)\\&=&i(A-\frac{e^{i(x-y)}}{r}\bar{A})\\&=&i\frac{1+r^2-2re^{i(x-y)} }{1-r^2 }P_k(r,x,y)\\
    & =&ir^k\frac{k}{2\pi}(1+r^2-2re^{i(x-y)} )\int_{-1}^1\dfrac{(1+u)(1-u^2 )^{k-1} du}{(1-2r(\cos x\cos y+u\sin x\sin y)+r^2 )^{k+1}}.
     \end{eqnarray*}
It may be noted that for $r\in [0,1]$,  $u\in [-1,1]$ and $x\neq \pm y$,
$$1-2r(\cos x\cos y-u\sin x\sin y)+r^2\geq 
4r \min \left(\sin^2\frac{|x-y|}{2}, \sin^2\frac{|x+y|}{2}\right)=4r \sin^2\frac{||x|-|y||}{2}.$$
Consequently one obtains the kernel bound
$$\left|\sum_{n\in \mathbb{Z}}\frac{n_k}{|n_k|}r^{|n|+k} \gamma_n^{k} E_n^k(ix) E_n^k(-iy)\right|\leq 
C_k\frac{1}{\left(\sin \frac{||x|-|y||}{2}\right)^{2k+2}}\leq C_k\frac{1}{||x|-|y||^{2k+2}},$$
(Using the elementary lower bound $\sin t\geq \dfrac{2}{\pi}t$ , for $t\in[0,\dfrac{\pi}{2}]$).
Therefore, when $\pm x\notin supp(f)$, the dominated convergence theorem applies, allowing us to write
$$H_k(f)(x)=\int_{-\pi}^{\pi }\mathcal{H}_k(x,y)f(y)dm_k(y)$$
with $\mathcal{H}_k$ is given by (\ref{hk}). To  prove  (\ref{hkk}), we argue as follows. First note that, for every  $u\in [-1,1]$ and $x\neq \pm y$,
$$\dfrac{(1+u)}
{1-\cos x\cos y-u\sin x\sin y }\leq \frac{1}{\sin^2 \frac{x-y}{2}}.
$$
Combining this estimate with (\ref{hk}) yields
$$|\mathcal{H}_k(x,y)|\leq C_k\frac{|\sin\frac{x-y}{2}|}{\sin^2 \frac{x-y}{2}\sin^{2k} \frac{||x|-|y||}{2}}\leq C_k\frac{1}{\sin^{2k+1} \frac{||x|-|y||}{2}}
\leq C_k \frac{1}{||x|-|y||^{2k+1}}.$$
This concludes the proof.
\end{proof}
\begin{thm}\label{lp}
The Riesz transform $H_k $extends to a bounded operator on $L^p([-\pi,\pi],dm_k)$
 for $1<p<\infty$ and is of weak type
$(1,1)$.
\end{thm}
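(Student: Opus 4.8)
The strategy is to combine the $L^2$-boundedness of $H_k$, already established from the spectral description $H_k E_n^k = \pm i\,E_n^k$, with a Calderón--Zygmund argument following the scheme of \cite{MS}. The underlying measure space $([-\pi,\pi],dm_k)$ is of homogeneous type: $dm_k$ is doubling, the weight $|\sin x|^{2k}$ being bounded above and below away from $\{0,\pm\pi\}$, while $m_k(B(x_0,\delta))\sim\delta^{2k+1}$ as $x_0$ approaches one of these three points. The size estimate (\ref{hkk}) of Proposition \ref{p22} then furnishes the Calderón--Zygmund size bound, sharp precisely at the degeneracy points, where the homogeneous dimension equals $2k+1$.

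The key remaining ingredient is the smoothness (Hörmander) estimate for $\mathcal{H}_k$. I would differentiate the integral representation (\ref{hk}) in the variable $y$ and prove the gradient bound
$$\Big|\frac{\partial}{\partial y}\mathcal{H}_k(x,y)\Big|\le\frac{C_k}{\big||x|-|y|\big|^{2k+2}},\qquad x\neq\pm y,$$
using the elementary identity and lower bound
$$1-(\cos x\cos y+u\sin x\sin y)=(1+u)\sin^2\tfrac{x-y}{2}+(1-u)\sin^2\tfrac{x+y}{2}\ge 2\sin^2\tfrac{\big||x|-|y|\big|}{2},\qquad u\in[-1,1],$$
which follows from the preceding lemma. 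The two contributions balance: differentiating the prefactor $1-e^{i(x-y)}$ forfeits its smallness $|\sin\tfrac{x-y}{2}|$ but adds no power to the denominator, whereas differentiating the integrand retains the prefactor and raises the denominator by exactly one power, so that both are $O\!\big(\big||x|-|y|\big|^{-2k-2}\big)$. By the mean value theorem this produces the integrated Hörmander condition with respect to $dm_k$.

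One structural point must be addressed before the machinery applies: $\mathcal{H}_k$ is singular along both the diagonal $x=y$ and the antidiagonal $x=-y$, so the estimates are naturally phrased through $\big||x|-|y|\big|$, which is not a metric on $[-\pi,\pi]$. To obtain a genuine space of homogeneous type I would use that $dm_k$ is even, so that the reflection $x\mapsto-x$ is an isometry of every $L^p(dm_k)$, and pass to the even--odd decomposition restricted to $[0,\pi]$. Under this identification $H_k$ becomes a matrix of integral operators on $([0,\pi],dm_k,|\cdot|)$ whose kernels are built from $\mathcal{H}_k(x,\pm y)$ and are singular only on the diagonal $x=y$, while inheriting the size and smoothness bounds above with $|x-y|$ in place of $\big||x|-|y|\big|$; their $L^2$-boundedness is inherited from that of $H_k$ through the isometry. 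The Calderón--Zygmund decomposition on the doubling space $([0,\pi],dm_k)$ then yields the weak type $(1,1)$ estimate; interpolation with the $L^2$ bound gives boundedness on $L^p$ for $1<p\le 2$, and since $H_k^*=-H_k$, duality extends it to $2\le p<\infty$.

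The main obstacle is twofold and concentrated in the steps above. Establishing the smoothness estimate requires differentiating under the integral in (\ref{hk}) and estimating the result uniformly, with the delicate bookkeeping that keeps the antidiagonal singularity on the same footing as the diagonal one. In addition, controlling the kernels near the degeneracy points $0,\pm\pi$, where the two singular sets meet and the weight $|\sin x|^{2k}$ collapses, is exactly the feature that distinguishes this problem from the classical Hilbert transform and forces the homogeneous-dimension $2k+1$ bounds rather than the classical order-one ones.
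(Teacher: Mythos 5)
Your overall scheme ($L^2$ bound plus a Calder\'on--Zygmund/H\"ormander argument on the doubling space $([-\pi,\pi],dm_k)$, with duality via $H_k^*=-H_k$ for $p>2$) is the same as the paper's, and your identity
$1-(\cos x\cos y+u\sin x\sin y)=(1+u)\sin^2\tfrac{x-y}{2}+(1-u)\sin^2\tfrac{x+y}{2}$ is correct and is essentially how the paper gets its lower bounds. But there is a genuine gap at the one step that carries all the difficulty: the pointwise gradient bound
$\bigl|\partial_y\mathcal{H}_k(x,y)\bigr|\le C_k\,\bigl||x|-|y|\bigr|^{-2k-2}$, even if true, does \emph{not} produce the H\"ormander condition (\ref{Hor}) with respect to $dm_k$ by the mean value theorem. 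Take $y$ bounded away from $0,\pm\pi$ (say $y=\pi/2$), so that $|\sin x|^{2k}\sim 1$ for $x$ near $\pm y$; with $\delta=|y-y'|$ your bound gives
\begin{equation*}
\int_{||x|-|y||>2\delta}\delta\,\bigl||x|-|y|\bigr|^{-2k-2}\,dm_k(x)
\;\gtrsim\;\delta\int_{2\delta}^{c}s^{-2k-2}\,ds\;\sim\;\delta^{-2k},
\end{equation*}
which blows up as $\delta\to0$ for every $k>0$. The heuristic that the homogeneous dimension is $2k+1$ is only valid near the three degeneracy points $0,\pm\pi$; away from them the measure is locally one-dimensional, and there a bound of order $d^{-2k-2}$ in $d=||x|-|y||$ is far too weak (one would need roughly $d^{-2}\,(|\sin x|+d)^{-2k}$). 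Your reduction to $[0,\pi]$ does not repair this: the same computation fails verbatim for the kernels $\mathcal{H}_k(x,\pm y)$ on $([0,\pi],dm_k,|\cdot|)$.

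What the paper does differently, and what your argument is missing, is precisely that it never collapses the $u$-integral in $\partial_y\mathcal{H}_k=I_1+I_2$ into a pointwise power of $||x|-|y||$. It bounds only the numerator factors by constants (for $I_2$ this requires a case analysis on the signs of $x$ and $y$, via the auxiliary function $A(u)$), keeps the denominator $(1-\cos x\cos y_t-u\sin x\sin y_t)^{k+1}$ intact, and then integrates in $x$ \emph{first}, using the normalization of the product-formula kernel, $\int_{-\pi}^{\pi}W_k(x,y,z)\,dm_k(x)=1$, to convert
\begin{equation*}
\int_{-\pi}^{\pi}\int_{-1}^{1}(1-u^2)^{k-1}\,g(\cos x\cos y_t+u\sin x\sin y_t)\,du\,dm_k(x)
\quad\text{into}\quad
c_k\int_{0}^{\pi}g(\cos z)\,dm_k(z).
\end{equation*}
This transfers the weight $\sin^{2k}$ into the numerator exactly where it is needed, and the H\"ormander integral reduces to
$|y-y'|\int_0^{\pi}\sin^{2k}x\,\bigl(\sin^2|y-y'|+1-\cos x\bigr)^{-k-1}dx\le C_k$, i.e.\ an effectively one-dimensional estimate $\delta\int_{t\gtrsim\delta}t^{-2}\,dt\lesssim1$, uniformly in the position of $y$. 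To make your proof work you must either adopt this integration-in-$x$ device or prove the sharper weighted pointwise bound above; the remaining components of your proposal (size estimate, $L^2$ input, weak $(1,1)$ by Calder\'on--Zygmund decomposition, interpolation and duality) are fine and agree with the paper.
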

\begin{proof}
The proof proceeds by embedding $H_k$ into the Calderón–Zygmund framework.
 The measure $m_k$ is a symmetric doubling on $[-\pi,\pi]$. More precisely, we shall make use of a Hormander-type regularity condition, in the form established and applied in \cite{B1, B2}. Accordingly, the key ingredient of the proof is to establish that the kernel  $\mathcal{H}_k(x,y)$ satisfies away from the diagonals $x\neq \pm y$, the estimate
 \begin{equation}\label{Hor}
 \int_{||x|-|y||>2|y-y'|}|\mathcal{H}_k(x,y)-\mathcal{H}_k(x,y')|\leq C_k,
 \end{equation}
for some constant $C_k>0$  independent of $y$ and $y'$.
 Throughout the argument, we also  denote by $C_k$ for a positive constant whose value may vary from line to line.
\par Let us begin by writing
$$\mathcal{H}_k(x,y)-\mathcal{H}_k(x,y')=(y-y')
\int_0^1\frac{\partial \mathcal{H}_k}{\partial y}(x,y_t)dt$$
where  $y_t=y'+t(y-y')$. The derivative of the kernel is given by
\begin{eqnarray*}
&&\frac{\partial \mathcal{H}_k}{\partial y}(x,y)\\&&=-\frac{k2^{-2k}}{2\pi}e^{i(x-y)}\int_{-1}^1\dfrac{(1+u)(1-u^2 )^{k-1} du}{(1-\cos x\cos y-u\sin x\sin y )^{k+1}}
\\&&-\frac{k2^{-2k}}{2\pi}(1-e^{i(x-y)}\int_{-1}^1\dfrac{(1+u)(1-u^2 )^{k-1} 
(\cos x\sin y-u\sin x\cos y) du}{(1-\cos x\cos y-u\sin x\sin y )^{k+2}}
\\&&=I_1(x,y)+I_2(x,y).
\end{eqnarray*}
Notice that under the assumption 
 $||x|-|y||\geq  2|y-y'|$  we have that  
 $$||x|-|y_t||\geq  |y-y'|$$
and hence
  $$1-\cos x\cos y_t-u\sin x\sin y_t\geq 
 \sin^2\frac{||x|-|y_t||}{2}\geq \sin^2\frac{|y-y'|}{2}.$$
 Consequently, we obtain the estimate
$$|I_1(x,y_t)|\leq C_k
 \int_{-1}^1\dfrac{(1-u^2 )^{k-1} du}{(\sin^2|y-y'|+1-\cos x\cos y_t-u\sin x\sin y_t )^{k+1}}. $$
 and therefore, 
 \begin{eqnarray}\label{11}\nonumber
 &&|y-y'|\int_{||x|-|y||>2|y-y'|}\int_0^1|I_1(x,y_t)|dtdm_k(x)\\\nonumber
 &\leq& C_k|y-y'|\int_0^1 \int_{-\pi}^\pi
 \int_{-1}^1\dfrac{(1-u^2 )^{k-1} du}{(\sin^2|y-y'|+1-(\cos x\cos y_t+u\sin x\sin y_t) )^{k+1}}du dm_k(x)dt\\ \nonumber
 &\leq& C_k |y-y'|\int_{0}^\pi
 \dfrac{sin^{2k} x}{(\sin^2|y-y'|+1-\cos x )^{k+1}} dx\\
 &\leq &C_k |y-y'|\int_{0}^{^{+\infty}}
 \dfrac{t^{2k}}{(|y-y'|^2+ t^2 )^{k+1}} dt\leq C_k  
 \end{eqnarray}
  where  we have used the standard bounds, for $t\in [0,\pi]$
$\sin t\leq t $ ,$1-\cos t=2\sin^2\frac{t}{2}$ .Also for $t\in [0,\pi/2]$,  $\sin t\geq 2t/\pi$. This completes the estimate for the integral associated with 
$I_1$, establishing the desired bound for this part of the Hörmander condition.
It is appropriate here to clarify the second inequality of (\ref{11}). In fact, let
$$g(x)=\frac{1}{\sin^2|y-y'|+1-x}, \qquad |x|\leq 1.$$
Making the  substitution
$z=\arccos(\cos x\cos y+u\sin x\sin y)$
we obtain
$$\int_{-1}^1 g(\cos x\cos y+u\sin x\sin y )(1-u^2)^{k-1}du=\frac{2\Gamma(k)\sqrt{\pi}}{\Gamma(k+\dfrac{1}{2})}\int_{0}^\pi g(\cos z)W_k(x,y,z)dm_k(z).$$
 Integrating both sides of with respect to $dm_k(x)$, and using the normalization property
 $$\int_{-\pi}^\pi W_k(x,y,z)dm_k(x)=1$$
  it follows that
 $$\int_{-\pi}^\pi\left\{\int_{-1}^1 g(\cos x\cos y+u\sin x\sin y )(1-u^2)^{k-1}\right\}dudm_k(x)=\frac{2\Gamma(k)\sqrt{\pi}}{\Gamma(k+\dfrac{1}{2})}\int_{0}^\pi g(\cos z)dm_k(z).$$
 \par  Turning now  to the term $I_2(x,y)$, for which we distinguish two separate cases for $x,y\in [-\pi,\pi]$.\\
\textbf{Case 1.} $x$ and $y$ have the same sign. Introduce the auxiliary function, 
$$ A(u)=(1-e^{i(x-y)})\dfrac{\cos x\sin y-u\sin x\cos y}{1-\cos x\cos y-u\sin x\sin y }$$
A direct inspection shows that
$$|A(u)|\leq \max (|A(-1)|,|A(1)|).$$
For $u=1$ we compute
$$|A(1)|= \left|\frac{\sin\frac{x-y}{2}\sin(x-y)}{\sin^2\dfrac{(x-y)}{2}}\right|=
2\cos\frac{x-y}{2}\leq 2.$$
While for $u=-1$ we obtain
$$|A(-1)|= \left|\frac{\sin\frac{x-y}{2}\sin(x+y)}{\sin^2\dfrac{(x+y)}{2}}\right| =2\left|\frac{\sin\frac{x-y}{2}\cos\frac{x+y}{2}}{\sin\dfrac{(x+y)}{2}}\right|\leq  2\cos\frac{x+y}{2}\leq 2.$$
Hence 
$$|A(u)|\leq 2, \qquad \text{for all}\; u\in [-1,1].$$
\textbf{Case 2.} $x$ and $y$ have opposite signs. We have for $u\in[-1,1]$,
$$\left|\dfrac{(1+u)}
{1-\cos x\cos y-u\sin x\sin y }\right|\leq \frac{1}{\sin^2 \frac{x-y}{2}}.$$
Since
$$|\cos x\sin y-u\sin x\cos y|\leq \max (|\sin(x-y)|,|\sin(x+y)|)$$
and 
$$\left|\sin \frac{x+y}{2}\right|\leq \left|\sin \frac{x-y}{2}\right|$$
we  obtain that 
$$ \left|(1-e^{i(x-y)})(1+u)\dfrac{\cos x\sin y-u\sin x\cos y}{1-\cos x\cos y-u\sin x\sin y }\right|\leq 4.$$
Combining the two cases, we deduce
$$|I_2(x,y)\leq  C\int_{-1}^1\dfrac{(1-u^2 )^{k-1} 
 }{(1-\cos x\cos y-u\sin x\sin y )^{k+1}} du.$$
Arguing in a manner analogous to that used for $I_1$, we obtain
$$|y-y'|\int_{||x|-|y||>2|y-y'|}\int_0^1|I_2(x,y_t)|dtdm_k(x)\leq C_k.$$
This completes the proof of (\ref{Hor}) and hence Theorem \ref{lp} follows.
\end{proof}
In conclusion, we point out that when restricted to even functions, Theorem \ref{lp} can be seen as an extension of the corresponding result for the Riesz transform associated with ultraspherical polynomials, as established in \cite{DB}.

\newpage


\begin{thebibliography}{HD}

\bibitem{B1} B. Amri, and M. Sifi.   \textit{ Riesz transforms for Dunkl transform}, Annales Mathématiques Blaise Pascal, 19(1)(2012), 247–262.
\bibitem{B2}
B. Amri, A. Gasmi and M. Sifi,\textit{ Linear and bilinear multiplier operators for the Dunkl transform}. Mediterranean Journal of Mathematics, 7(4)(2010), 503–521.

\bibitem{A}J.-Ph. Anker, F. Ayadi F., M. Sifi, \textit{ Opdam’s hypergeometric functions: product formula and convolution structure in dimension 1}. Adv. Pure Appl. Math. 3, 11–44 (2012).

\bibitem{ba1}
  T. H. Baker and  P. J. Forrester,   \textit{ Non-Symmetric Jack Polynomials and Integral Kernels}, Duke Math. J. 95 (1998), no.1, 1-50. 

\bibitem{Ba} T. H. Baker and  P. J. Forrester,    \textit{Symmetric Jack Polynomials from Non-Symmetric Theory}, Annals of Combinatorics 3 (1999), 159-170.
\bibitem{DB} D. Buraczewski, T. Martinez, J. L. Torrea and  R. Urban,\textit{ On the Riesz transform associated with the ultraspherical polynomials}, Journal d'Analyse Mathematique, 98 (2006), 113–143.

\bibitem{H} H. Y. Hsu, \textit{Certain integrals and infinite series involving
 ultraspherical polynomials and Bessel functions,} Duke
Math. J., 4 (1938), 374–383.


 \bibitem{MS} B. Muckenhoupt, E.M. Stein, \textit{ Classical expansions and their relation to conjugate harmonic func-
tions}, Trans. Amer. Math. Soc. 118 (1965), 17–92

 \bibitem{O1} E. M. Opdam,  \textit{  Harmonic analysis for certain representations of graded Hecke algebras.} Acta Math. 175, 75–121 (1995)
 \bibitem{O2} E. M. Opdam,  \textit{ Lecture notes on Dunkl operators for real and complex reflection groups}, Math. Soc. Japan Mem. 8, Math. Soc. Japan, Tokyo, 2000.

 \bibitem{Sah} S. Sahi,\textit{ A new formula for weight multiplicities and characters}. Duke Math. J. 101
(2000), 77-84.


\bibitem{G} G. Szegö,  \textit{Orthogonal polynomials}, Amer. Math. Soc. Colloq. Publ. Vol. 23, Amer.
Math. Soc, Providence, R. I., 1939.





\end{thebibliography}
 \end{document}